\documentclass{amsart}
\usepackage{euscript,amsmath, amssymb, amsfonts, mathtools}
\usepackage{mathrsfs}
\usepackage{mathabx}
\usepackage{stmaryrd}

\makeatletter
\@namedef{subjclassname@1991}{Subject}
\makeatother

\numberwithin{equation}{section}
\usepackage{enumitem}   

\newtheorem{theorem}{Theorem}[section]
\newtheorem{lemma}[theorem]{Lemma}
\newtheorem{proposition}[theorem]{Proposition}
\newtheorem{corollary}[theorem]{Corollary}

\theoremstyle{definition}
\newtheorem{definition}[theorem]{Definition}

\newtheorem{algo}{Algorithm}[section]

\theoremstyle{remark}
\newtheorem*{remark}{Remark}
\newtheorem*{remarks}{Remarks}
\usepackage{graphicx}  
\usepackage{icomma}
\usepackage{bbm}
\usepackage{tikz}
\usetikzlibrary{shapes}
\usepackage{color}
\usepackage{algorithmic}
\usepackage{verbatim}
\usepackage{xurl}
\usepackage{indentfirst}
\usepackage{lipsum}
\usepackage[backend = biber]{biblatex}
\addbibresource{bib.bib}
\pagenumbering{arabic}
\usepackage{placeins}
\usepackage{bigints}
\usepackage{subcaption}
\usepackage{appendix}
\DeclareMathOperator{\Span}{Span}
\DeclareMathOperator{\rank}{rank}
\DeclareMathOperator{\CC}{\mathbb C}
\DeclareMathOperator{\RR}{\mathbb R}
\DeclareMathOperator{\ZZ}{\mathbb Z}
\DeclareMathOperator{\NN}{\mathbb N}
\usepackage{csquotes}
\makeatletter
\makeatother

\setcounter{biburllcpenalty}{7000}
\setcounter{biburlucpenalty}{8000}
\begin{document}
	\title[A Landen-type method for computation of Weierstrass functions]{A Landen-type method for computation of Weierstrass functions}
	
	\author{Matvey Smirnov}
	\address{119991 Russia, Moscow GSP-1, ul. Gubkina 8,
		Institute for Numerical Mathematics,
		Russian Academy of Sciences}
	\email{matsmir98@gmail.com}

        \author{Kirill Malkov}
        \address{119991 Russia, Moscow GSP-1, ul. Gubkina 8,
		Institute for Numerical Mathematics,
		Russian Academy of Sciences}
	\email{markovka838@gmail.com}

        \author{Sergey Rogovoy}
        \address{Lomonosov MSU, Faculty of Computational Mathematics and Cybernetics, Moscow, Russia, 119991}
        \email{rogovoyserg@gmail.com}
        
	\begin{abstract}
		We establish a version of the Landen's transformation for Weierstrass functions and invariants that is applicable to general lattices in complex plane. Using it we present an effective method for computing Weierstrass functions, their periods, and elliptic integral in Weierstrass form given Weierstrass invariants $g_2$ and $g_3$ of an elliptic curve. Similarly to the classical Landen's method our algorithm has quadratic rate of convergence.
		
		\smallskip
		\noindent \textbf{Keywords.} Weierstrass functions, Landen's transformation.
	\end{abstract}
	\subjclass{32A08, 32A10}
	\maketitle

\section{Introduction}
There are numerous applications of elliptic functions in various fields of mathematics and physics (see, e.g.~\cite{Akhiezer},~\cite{Lawden}, and references therein). For efficient computations with elliptic functions various methods are known. Traditionally, the most frequently used approaches include either the Landen method (or equivalent methods based on the arithmetic-geometric mean) for Jacobi functions~\cite{luther}, or summation of theta-series~\cite{Johansson}. It is noteworthy that Weierstrass functions, even though they are the most convenient for theoretical framework of elliptic functions and curves, are not usually considered as an effective computational tool. In this paper we present an approach related to the Landen method that provides effective computation of all the Weierstrass functions, elliptic integral in the Weierstrass form, and periods of an elliptic curve given the Weierstrass invariants $g_2, g_3$.

    The part of the method concerning computation of periods and the Abel map was already presented in the work~\cite{Cremona} in an equivalent form with the use of the complex version of arithmetic-geometric mean. Also a similar method for computation of the $\wp$-function was presented in~\cite[Sec.~4.3]{labrande}. In this work we present a unified approach to these computations, as well as the computation of other Weierstrass functions. It is noteworthy that the calculation of all Weierstrass functions simultaneously allows not only to provide a framework to all possible computations with elliptic functions, but also to solve the problem that emerges in adaptations of the Landen or AGM-type methods to theta-functions. Namely, for the theta function (and the Weierstrass $\sigma$ function as well) Landen's transformation only allows  to determine the squared value of the function. For example, in~\cite{labrandeg1} this problem is solved by using a low-accuracy approximation of the theta-function by its Fourier series, which allows to determine the sign. Such an approach leads to a significant increase in computational complexity, so the method shows its efficiency only during calculations with very high precision. For the Weierstrass $\sigma$ function, however, this difficulty can be overcome by computing simultaneously $\wp, \wp'$, and $\sigma^2$ at $z/2$ and using the duplication formula.

    Similarly to the Landen and AGM-based methods the method presented here has quadratic rate of convergence. For example, in practice it is usually sufficient to perform at most $5$ iterations of the Landen transformation in order to achieve the machine precision while computing with double precision floating point arithmetic. Moreover, due to the quadratic convergence, there is only mild increase in complexity with the use of the high precision arithmetic.
    Finally, we note that the essence of this method implies that the computations are stable for curves that are close to being degenerate. To demonstrate this we compute parameters of a conformal mapping problem studied in~\cite{Smirnov} for domains, such that the corresponding elliptic curve is near degeneration.

    The paper is organized as follows. In Section~\ref{sec:preliminaries} we recall the notation and several facts from the theory of elliptic functions. In particular, we are interested in the Weierstrass functions that are associated not only with lattices but also with subgroups of $\CC$ of rank $1$ or $0$, which are not usually covered in literature. The main results of this paper are collected in Section~\ref{sec:sublattices}. Namely, we derive the Landen transformation for the Weierstrass functions, describe the optimal way to choose a subgroup of index $2$ in a lattice, and analyze the rate of convergence of Weierstrass invariants under the iterations of the Landen transformation. Some of the results of Section~\ref{sec:sublattices} are known in literature, but it was decided to include their proofs for completeness and convenience. Section~\ref{sec:periods} contains the description of the method that computes periods and the Abel map (i.e. the elliptic integral in Weierstrass form). Since an equivalent version of this method was already studied in~\cite{Cremona}, we do not include analysis of the convergence. In Section~\ref{sec:computation} we describe the algorithm to compute Weierstrass functions and give a simple proof of the quadratic rate of convergence for the $\wp$ function. Finally, Section~\ref{sec:experiments} contains numerical experiments that show the quadratic convergence and an application to a conformal mapping problem.

    \section{Preliminaries}
\label{sec:preliminaries}

The letter $\Gamma$ will always denote a discrete additive subgroup in $\CC$. It is clear that such group is free and has rank not exceeding $2$. A subgroup $\Gamma$ is called a {\it{lattice}} if its rank is exactly $2$. Given $A \subset \CC$ we denote the smallest additive subgroup of $\CC$ containing $A$ by $\Span A$. That is, $\Span A$ is the integer linear span of $A$. As usual, if $\Gamma$ is a lattice and $f$ is a meromorphic function on $\CC$, such that all elements of $\Gamma$ are periods of $f$, we say that $f$ is $\Gamma$-{\it{elliptic}}.

Given a {\it{discrete subgroup}} $\Gamma \subset \CC$ we define $\wp, \zeta$ and $\sigma$ as
$$\wp(z;\Gamma) =
	\frac{1}{z^2} + \sum_{u \in \Gamma \setminus \{0\}}\left(\frac{1}{(z - u)^2} - \frac{1}{u^2} \right),
$$
$$\zeta(z;\Gamma) = \frac{1}{z} + \sum_{u \in \Gamma \setminus \{0\}}\left(\frac{1}{z - u} + \frac{1}{u} + \frac{z}{u^2} \right),
$$
$$\sigma(z;\Gamma) = z\prod_{u \in \Gamma \setminus \{0\}} \left(1 - \frac{z}{u}\right)e^{\frac{z}{u} + \frac{z^2}{2u^2}}.
$$

Moreover, functions
\begin{equation}\label{gw}
    g_2(\Gamma) = \sum_{u \in \Gamma \setminus \{0\}}\frac{60}{u^4},\;\; g_3(\Gamma) = \sum_{u \in \Gamma \setminus \{0\}}\frac{140}{u^6}
\end{equation}
establish a bijective mapping from the set of all discrete additive subgroups of $\CC$ to $\CC^2$. We note that $\Gamma \subset \CC$ is a lattice if and only if
\begin{equation}\label{eq_DeltaDef}
    \Delta(\Gamma) = g_2(\Gamma)^3 - 27 g_3(\Gamma)^2
\end{equation}
does not vanish. The function $\sigma(z; \Gamma)$ is an entire function of the variables $(z, g_2, g_3)$ (see, e.g.,~\cite{Weier}).

Finally, we recall that $(g_2, g_3) \in \CC^2$ define a curve, whose affine part is given by the equation
$$y^2 = 4x^3 - g_2 x - g_3.$$
If $\Gamma$ is a lattice, then the curve $S(\Gamma)$, that corresponds to $(g_2(\Gamma), g_3(\Gamma))$, is a nonsingular (elliptic) curve isomorphic to $\CC / \Gamma$. The equivalence of the foregoing curves is established by the Abel map $\mathcal A_\Gamma: S(\Gamma) \rightarrow \CC/\Gamma$, given by the {\it{elliptic integral}}
$$
    \mathcal A_\Gamma (x,y) = \int_{(\infty, \infty)}^{(x,y)} dx/y \quad \mathrm{mod}\; \Gamma.
$$
The inverse mapping can be computed as $(x,y) = (\wp(z; \Gamma), \wp'(z; \Gamma))$, where $z + \Gamma = \mathcal A_\Gamma(x,y)$. The foregoing formulae can be used in the case when $\Gamma$ is an arbitrary discrete subgroup of $\CC$,  not necessary a lattice. Then $S(\Gamma)$ has a unique singular point $s$, and $\mathcal A_\Gamma$ is an equivalence between $S(\Gamma)\setminus \{s\}$ and $\CC / \Gamma$. The formula for the inverse remains unchanged.

We will usually use the roots of the polynomial $4x^3 - g_2 x - g_3$ instead of coefficients $g_2, g_3$. It is clear that the roots $e_1, e_2, e_3$ satisfy
\begin{equation}\label{eq_InvFromRoots}
    g_2 = 2(e_1^2 + e_2^2 + e_3^2), \;\; g_3 = 4 e_1 e_2 e_3, \;\; g_2^3 - 27 g_3^2 = 16(e_1 - e_2)^2(e_2 - e_3)^2(e_1 - e_3)^2.
\end{equation}
If $\Gamma$ is a lattice, then the roots $e_1,e_2,e_3$ of the corresponding polynomial (with coefficients $g_2(\Gamma), g_3(\Gamma)$) are simple and $\{e_1,e_2,e_3\} = \{\wp(z): z \in (\Gamma/2) \setminus \Gamma\}$.

Finally, if $\Gamma \subset \CC$ is a discrete subgroup but not a lattice, then the corresponding Weierstrass functions can be explicitly found in elementary functions (see, e.g.~\cite[p.~201, Table~VII]{Akhiezer}). In particular, if $\rank(\Gamma) = 1$, and $\omega$ is a generating element of $\Gamma$ (that is $\Gamma = \omega \ZZ$), then

    $$
    \wp(z;\Gamma) = \frac{\pi^2}{\omega^2} \left( \frac{1}{\sin ( \frac{\pi z}{\omega})^2} - \frac{1}{3} \right),$$
    $$
    \zeta(z;\Gamma) = \frac{\pi^2 z}{ 3 \omega^2} + \frac{\pi}{\omega}  \cot\left(\pi  \frac{z}{\omega}\right),
    $$
    $$
    \sigma(z;\Gamma) = \frac{\omega}{\pi}  \exp\left(  \frac{\pi^2 z^2}{6 \omega^2}\right) \sin\left(\pi \frac{z}{\omega}\right),
    $$
    $$
    \mathcal A_{\Gamma}(x,y) = - \frac{\omega}{\pi}\arctan\left(\frac{6\pi \omega^2 x + 2\pi^3}{3\omega^3 y}\right) + \Gamma
    $$
    Moreover,
    $$ g_2(\Gamma) = \frac{4\pi^4}{3\omega^4},\;\; g_3(\Gamma) = \frac{8\pi^6}{27\omega^6},$$
    and the roots of the polynomial $4x^3 - g_2(\Gamma)x - g_3$ can be calculated as
    \begin{equation}\label{eq_limRoots}
        e_1 = \wp\left(\frac{\omega}{2}\right) = \frac{2\pi^2}{3\omega^2}\,,  \quad e_2 = e_3 = -\frac{e_1}{2}\,.
    \end{equation}
    Finally, if $\rank(\Gamma) = 0$ (that is, $\Gamma = \{0\}$), we get
    $$\wp(z;\Gamma) = \frac{1}{z^2}\,,\;\; \zeta(z;\Gamma) = \frac{1}{z}\,,\;\;\sigma(z;\Gamma) = z,\;\; \mathcal A_{\Gamma}(x,y) = -2\,\frac{x}{y}\,,$$
    $$g_2(\Gamma) = g_3(\Gamma) = 0.$$

\section{Subgroups of index $2$ in lattices}
\label{sec:sublattices}
Throughout this section $\Gamma \subset \CC$ denotes a lattice. Recall that the {\it{index}} $[\Gamma:G]$ of a subgroup $G \subset \Gamma$ is the number of elements in the quotient group $\Gamma / G$.

\begin{proposition}\label{p31}
    Let $\Gamma' \subset \Gamma$ be a subgroup of index $2$. Then the following statements hold.
    \begin{enumerate}[label=(\roman*)]
        \item\label{p31i} $\Gamma'$ is a lattice.
        \item\label{p31ii} $2\Gamma \subset \Gamma' \subset \Gamma$ and $[\Gamma':2\Gamma] = 2$.
        \item\label{p31iii} Let $\omega \in \Gamma' \setminus 2\Gamma$. Then $\Gamma' = \Span (2\Gamma \cup \{\omega\})$.
    \end{enumerate}
\end{proposition}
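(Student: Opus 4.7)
The plan is to establish the three statements in order, exploiting basic group-theoretic facts about $\Gamma \cong \ZZ^2$ and the quotient $\Gamma/\Gamma'$ of order $2$.

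For \ref{p31i}, I would note that $\Gamma'$ is automatically discrete as a subgroup of the discrete group $\Gamma$, so it suffices to check that $\rank(\Gamma') = 2$. Since $\Gamma/\Gamma'$ has order $2$, it is in particular a torsion group, so tensoring the inclusion $\Gamma' \hookrightarrow \Gamma$ with $\mathbb Q$ gives an isomorphism of $\mathbb Q$-vector spaces, forcing $\rank(\Gamma') = \rank(\Gamma) = 2$. Thus $\Gamma'$ is a lattice.

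For \ref{p31ii}, the inclusion $2\Gamma \subset \Gamma'$ follows from Lagrange's theorem applied to $\Gamma/\Gamma'$: for any $g \in \Gamma$, the element $g + \Gamma'$ has order dividing $2$, hence $2g \in \Gamma'$. To pin down the index, I would use the multiplicativity of the index in the tower $2\Gamma \subset \Gamma' \subset \Gamma$, namely
\[
    [\Gamma : 2\Gamma] = [\Gamma : \Gamma'] \cdot [\Gamma' : 2\Gamma].
\]
Since $\Gamma \cong \ZZ^2$, we have $[\Gamma : 2\Gamma] = 4$, and by hypothesis $[\Gamma:\Gamma'] = 2$, giving $[\Gamma':2\Gamma] = 2$.

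For \ref{p31iii}, I would observe that by \ref{p31ii} the quotient $\Gamma'/2\Gamma$ has exactly two elements, namely $2\Gamma$ itself and a single nontrivial coset. The hypothesis $\omega \in \Gamma' \setminus 2\Gamma$ says that $\omega + 2\Gamma$ is the nontrivial coset, so
\[
    \Gamma' = 2\Gamma \cup (\omega + 2\Gamma) = \Span(2\Gamma \cup \{\omega\}),
\]
where the last equality holds because the right-hand side is the smallest subgroup of $\Gamma'$ containing both cosets and is visibly equal to the union. None of the steps presents a serious obstacle; the whole argument is essentially bookkeeping with indices, and the only point deserving a brief sentence of care is verifying $[\Gamma : 2\Gamma] = 4$, which follows from choosing any $\ZZ$-basis of $\Gamma$.
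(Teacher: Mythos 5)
Your proof is correct and follows essentially the same approach as the paper: part (i) by a rank argument, part (ii) via Lagrange's theorem and multiplicativity of the index in the tower $2\Gamma \subset \Gamma' \subset \Gamma$. The only (minor) divergence is in (iii), where you identify $\Gamma'$ directly as the union of the two cosets of $2\Gamma$ guaranteed by (ii) and observe that $\Span(2\Gamma \cup \{\omega\})$ equals that union, whereas the paper instead shows that $G = \Span(2\Gamma \cup \{\omega\})$ has index $2$ in $\Gamma$ by squeezing it strictly between $2\Gamma$ and $\Gamma$ and then concludes $G = \Gamma'$ from $[\Gamma:G] = [\Gamma:\Gamma'][\Gamma':G]$; both routes are equally valid elementary bookkeeping.
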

\begin{proof}
    The statement \ref{p31i} is clear. To prove \ref{p31ii} note that $2x = 0$ for all $x \in \Gamma / \Gamma'$. Therefore, $2x + \Gamma' = \Gamma'$ for all $x \in \Gamma$ which implies $2x \in \Gamma'$. The equality $[\Gamma':2\Gamma] = 2$ is easily derived using $[\Gamma:\Gamma'] = 2$ and $[\Gamma: 2\Gamma] = 4$.

    Now we prove \ref{p31iii}. Clearly, $G = \Span (2\Gamma \cup \{\omega\}) \subset \Gamma'$. Since $2 \Gamma \subset G \subset \Gamma$, the number $[\Gamma : G]$ can be only equal to  $1$, $2$, or $4$. Since $G$ is strictly greater than $2 \Gamma$, its index cannot be equal to $4$. On the other hand, $G$ is contained in $\Gamma'$, so $G \ne \Gamma$. Thus, $[\Gamma : G] = 2$. It remains to note $\Gamma'$ has the same index in $\Gamma$ as $G$ and the equality $[\Gamma:G] = [\Gamma:\Gamma'] [\Gamma':G]$ implies that $\Gamma'/G$ is a trivial group.
\end{proof}
\begin{corollary}\label{c31}
    Let $\omega \in \Gamma \setminus 2\Gamma$. Then there exists a unique subgroup $\Gamma' \subset \Gamma$ of index $2$, that contains $\omega$. Moreover, $\Gamma' = \Span(2\Gamma \cup \{\omega\})$.
\end{corollary}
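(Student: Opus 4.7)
The plan is to leverage Proposition~\ref{p31} directly, so the work reduces to constructing one index-$2$ subgroup containing $\omega$ and invoking part~\ref{p31iii} for uniqueness.

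For existence, I would set $\Gamma' := \Span(2\Gamma \cup \{\omega\})$, which manifestly contains $\omega$ and is a subgroup satisfying $2\Gamma \subset \Gamma' \subset \Gamma$. Since $[\Gamma : 2\Gamma] = 4$, the index $[\Gamma : \Gamma']$ divides $4$, and since $\Gamma'/2\Gamma$ sits inside $\Gamma/2\Gamma \cong (\ZZ/2\ZZ)^2$, its order is $1$, $2$, or $4$. The hypothesis $\omega \notin 2\Gamma$ guarantees that $\omega + 2\Gamma$ is a nonzero element of $\Gamma'/2\Gamma$, so $|\Gamma'/2\Gamma| \ge 2$; conversely, $\Gamma'/2\Gamma$ is generated by this single nonzero element (since every element of $\Gamma'$ is an integer combination of elements of $2\Gamma$ and $\omega$), so it is cyclic and hence of order exactly $2$. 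Therefore $[\Gamma' : 2\Gamma] = 2$ and $[\Gamma : \Gamma'] = 2$.

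For uniqueness, suppose $\Gamma''$ is any subgroup of $\Gamma$ of index $2$ containing $\omega$. Because $\omega \notin 2\Gamma$, we have $\omega \in \Gamma'' \setminus 2\Gamma$, and Proposition~\ref{p31}\ref{p31iii} applied to $\Gamma''$ yields $\Gamma'' = \Span(2\Gamma \cup \{\omega\}) = \Gamma'$.

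The only real obstacle is the index computation for $\Span(2\Gamma \cup \{\omega\})$, and this is elementary once one passes to the quotient $\Gamma/2\Gamma$; nothing else requires serious work, since Proposition~\ref{p31} has already done the structural heavy lifting.
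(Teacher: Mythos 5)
Your proposal is correct and follows essentially the same route as the paper: uniqueness via Proposition~\ref{p31}~\ref{p31iii}, and existence by showing $\Span(2\Gamma \cup \{\omega\})$ has index $2$ by squeezing it between $2\Gamma$ (index $4$) and $\Gamma$. Your quotient argument in $\Gamma/2\Gamma \cong (\ZZ/2\ZZ)^2$ is slightly more explicit than the paper's in ruling out $\Span(2\Gamma \cup \{\omega\}) = \Gamma$, but the substance is identical.
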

\begin{proof}
    The formula for $\Gamma'$ (if it exists) is contained in Proposition~\ref{p31}~\ref{p31iii}. The uniqueness follows. It remains to show that $G = \Span(2\Gamma \cup \{\omega\})$ has index $2$. But this is clear, since $2\Gamma$ has index $4$ and $G$ is strictly between $2\Gamma$ and $\Gamma$.
\end{proof}
\begin{corollary}
    A lattice $\Gamma$ has exactly three subgroups of index $2$. More precisely, let $\omega_1, \omega_2$ be a basis in $\Gamma$. Then the groups $$\Span(2\Gamma\cup \{\omega_1\}),\;\; \Span(2\Gamma\cup \{\omega_2\}),\;\; \Span(2\Gamma\cup \{\omega_1+\omega_2\})$$ are the only subgroups of index $2$ in $\Gamma$.
\end{corollary}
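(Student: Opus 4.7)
The plan is to identify the index-$2$ subgroups of $\Gamma$ with the nonzero elements of the quotient group $\Gamma/2\Gamma$. By Proposition~\ref{p31}~\ref{p31ii}, every index-$2$ subgroup $\Gamma'$ sits between $2\Gamma$ and $\Gamma$, and by the lattice correspondence this makes index-$2$ subgroups of $\Gamma$ exactly the same as index-$2$ subgroups of $\Gamma/2\Gamma$. Since $\omega_1,\omega_2$ is a basis of $\Gamma$, we have $\Gamma/2\Gamma \cong (\ZZ/2\ZZ)^2$, whose three nonzero cosets are represented by $\omega_1$, $\omega_2$, and $\omega_1+\omega_2$.

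Concretely, first I would verify that the assignment $\omega \mapsto \Span(2\Gamma \cup \{\omega\})$ only depends on the coset $\omega + 2\Gamma$: if $\omega' - \omega \in 2\Gamma$ then each of $\omega, \omega'$ lies in the span of $2\Gamma$ together with the other. Second, Proposition~\ref{p31}~\ref{p31iii} shows surjectivity: every index-$2$ subgroup $\Gamma'$ contains some $\omega \in \Gamma' \setminus 2\Gamma$ (this is nonempty since $[\Gamma':2\Gamma]=2$), and $\Gamma' = \Span(2\Gamma\cup\{\omega\})$.

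For the counting, I would apply Corollary~\ref{c31} to each of the three representatives $\omega_1,\omega_2,\omega_1+\omega_2$ to obtain three index-$2$ subgroups; these are pairwise distinct because if, say, $\omega_2 \in \Span(2\Gamma\cup\{\omega_1\})$ then $\omega_2$ would be congruent to $0$ or to $\omega_1$ modulo $2\Gamma$, contradicting that $\omega_1,\omega_2$ form a basis. Finally, for any index-$2$ subgroup $\Gamma'$, choose $\omega \in \Gamma'\setminus 2\Gamma$ and write $\omega = a_1\omega_1+a_2\omega_2$ with $a_i \in \ZZ$; the pair $(a_1 \bmod 2, a_2 \bmod 2)$ is one of the three nonzero elements of $(\ZZ/2\ZZ)^2$, so $\omega$ is congruent modulo $2\Gamma$ to one of $\omega_1,\omega_2,\omega_1+\omega_2$, and hence $\Gamma'$ coincides with the corresponding listed subgroup by Corollary~\ref{c31}.

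There is no real obstacle here; the argument is essentially the observation that the correspondence theorem reduces the claim to the trivial fact that $(\ZZ/2\ZZ)^2$ has exactly three subgroups of order $2$. The only thing to be a little careful about is to record that distinctness of the three listed subgroups really uses that $\omega_1,\omega_2$ is a basis (so that their mod-$2\Gamma$ classes are $\ZZ/2\ZZ$-linearly independent), rather than just a pair of elements of $\Gamma$.
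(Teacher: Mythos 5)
Your proof is correct. It differs from the paper's in how completeness is established. The paper's argument is a short proof by contradiction carried out in the quotient $\Gamma/\Gamma'$: if an index-$2$ subgroup $\Gamma'$ were distinct from all three listed groups, then by Proposition~\ref{p31}~\ref{p31iii} none of $\omega_1,\omega_2,\omega_1+\omega_2$ could lie in $\Gamma'$, so $\omega_1+\Gamma'$ and $\omega_2+\Gamma'$ would be distinct nonzero cosets, contradicting $[\Gamma:\Gamma']=2$. You instead work in $\Gamma/2\Gamma\cong(\ZZ/2\ZZ)^2$ and give a direct classification: the assignment $\omega\mapsto\Span(2\Gamma\cup\{\omega\})$ depends only on $\omega+2\Gamma$, every index-$2$ subgroup arises this way from some $\omega\in\Gamma'\setminus 2\Gamma$ (using Proposition~\ref{p31}~\ref{p31ii} and~\ref{p31iii}), and the nonzero classes are exactly those of $\omega_1,\omega_2,\omega_1+\omega_2$. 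Both arguments rest on the same Proposition~\ref{p31}; yours is slightly longer but more structural (it exhibits the bijection with nonzero elements of $(\ZZ/2\ZZ)^2$ explicitly, and hence generalizes immediately to index $p$), and it also spells out the pairwise distinctness of the three groups via $\ZZ/2\ZZ$-linear independence of the basis classes, a point the paper dismisses as clear. No gaps.
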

\begin{proof}
    It is clear that these groups are distinct and all have index $2$. It remains to prove that there are no other such groups. If $\Gamma' \subset \Gamma$ is a subgroup of index $2$ that is distinct from them, then $\omega_1, \omega_2, \omega_1 + \omega_2 \notin \Gamma'$ by Proposition~\ref{p31}~\ref{p31iii}. It follows that $\omega_1 + \Gamma', \omega_2 + \Gamma'$ are distinct nonzero elements in $\Gamma / \Gamma'$, which contradicts the assumption $[\Gamma, \Gamma'] = 2$.
\end{proof}

Now we fix a subgroup $\hat{\Gamma} \subset \Gamma$ of index $2$ and derive the Landen transformation of the Weierstrass functions.
\begin{proposition}\label{p32}
     For brevity we denote by $\wp(z), \zeta(z), \sigma(z)$ the Weierstrass functions corresponding to $\Gamma$, and by $\hat{\wp}(z), \hat{\zeta}(z), \hat{\sigma}(z)$ the Weierstrass functions corresponding to $\hat{\Gamma}$. Let us fix $\omega_1 \in \hat{\Gamma} \setminus 2\Gamma$ and $\omega_2 \in \Gamma \setminus \hat{\Gamma}$. Also let $e_1, e_2, e_3$ be (distinct) roots of the polynomial $4x^3 - g_2(\Gamma) x - g_3(\Gamma)$, and let $e_1 = \wp(\omega_1/2)$. Finally, let $\hat{e}_1, \hat{e}_2, \hat{e}_3$ denote the roots of $4x^3 - g_2(\hat{\Gamma}) x - g_3(\hat{\Gamma})$, and let $\hat{e}_1 = \hat{\wp}(\omega_2)$. Then the following statements hold.
     \begin{enumerate}[label=(\roman*)]
        \item\label{p32i} For the Weierstrass functions we have relations
        \begin{equation}\label{eq_PP2}\
            \wp(z) = \hat{\wp}(z) + \frac{(\hat{e}_2 - \hat{e}_1)(\hat{e}_3 - \hat{e}_1)}{\hat{\wp}(z) - \hat{e}_1},
    \end{equation}

        \begin{equation}\label{eq_PzPz}\
            \wp'(z) = \hat{\wp}'(z)\left(1 - \frac{(\hat{e}_2 - \hat{e}_1)(\hat{e}_3 - \hat{e}_1)}{(\hat{\wp}(z) - \hat{e}_1)^2}\right),
    \end{equation}

        \begin{equation}\label{eq_zeta_zeta}
     \zeta(z) = 2\hat{\zeta}(z)+\frac{1}{2}\frac{\hat{\wp'}(z)}{\hat{\wp}(z) - \hat{e_1}} + \hat{e}_1 z,
     \end{equation}

        \begin{equation}\label{eq_sigma_sigma}
     \sigma^2(z) = \exp(\hat{e}_1 z^2) (\hat{\wp}(z) - \hat{e}_1) \hat{\sigma}^4(z).
     \end{equation}
        \item\label{p32ii} The roots satisfy
        \begin{equation}\label{eq_ee}
            \hat{e}_1 = -\frac{e_1}{2}, \;\;
            16(\hat{e}_2 - \hat{e}_1)(\hat{e}_3 - \hat{e}_1) = (e_2 - e_3)^2.
        \end{equation}
        \item\label{p32iii} The Weierstrass invariants satisfy
        \begin{equation}\label{eq_g2}
            g_2(\hat{\Gamma}) = \frac{3e^2_1}{4} + (e_1 - e_2)(e_1 - e_3),
        \end{equation}
        \begin{equation}\label{eq_g3}
            g_3(\hat{\Gamma}) = -\frac{e^3_1}{8} + \frac{e_1}{2}(e_1 - e_2)(e_1 - e_3),
        \end{equation}
        \begin{equation}\label{eq_delta}
            \Delta(\hat{\Gamma}) = \frac{1}{16}(e_1 - e_2)(e_1 - e_3)(e_2 - e_3)^4.
        \end{equation}
     \end{enumerate}
\end{proposition}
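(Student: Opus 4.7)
To prove part \ref{p32i}, the starting point is the coset decomposition $\Gamma = \hat\Gamma \sqcup (\omega_2 + \hat\Gamma)$, from which I first establish the auxiliary identity
\begin{equation*}
\wp(z;\Gamma) = \hat\wp(z) + \hat\wp(z+\omega_2) - \hat e_1.
\end{equation*}
The right-hand side is $\hat\Gamma$-periodic by construction and also $\omega_2$-periodic (since $2\omega_2 \in \hat\Gamma$ swaps the two $\hat\wp$-terms), hence $\Gamma$-elliptic; it has double poles of leading coefficient $1$ exactly on $\Gamma$, so its difference from $\wp(z;\Gamma)$ is a bounded entire even function, and this difference vanishes by inspection of the constant term of the Laurent expansion at $z=0$ (using that $\hat\wp'(\omega_2)=0$ forces $\hat\wp(\omega_2+z)$ to be even in $z$ and $\hat\wp(\omega_2)=\hat e_1$). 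Combining this with the half-period duplication identity
\begin{equation*}
\hat\wp(z+\omega_2) - \hat e_1 = \frac{(\hat e_2-\hat e_1)(\hat e_3-\hat e_1)}{\hat\wp(z) - \hat e_1},
\end{equation*}
which I prove by the same principal-part argument on $\CC/\hat\Gamma$ (matching double zeros at $\hat\Gamma$ and double poles at $\omega_2+\hat\Gamma$, with the leading coefficient $(\hat e_1-\hat e_2)(\hat e_1-\hat e_3)$ of $\hat\wp(z)-\hat e_1$ at $z=\omega_2$ computed from the Weierstrass differential equation $(\hat\wp')^2 = 4\prod_i(\hat\wp-\hat e_i)$), yields \eqref{eq_PP2}. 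Formula \eqref{eq_PzPz} is then the $z$-derivative of \eqref{eq_PP2}.

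For \eqref{eq_zeta_zeta}, I integrate $\wp(z;\Gamma) = \hat\wp(z) + \hat\wp(z+\omega_2) - \hat e_1$ and invoke the classical addition formula $\hat\zeta(z+\omega_2) - \hat\zeta(z) - \hat\zeta(\omega_2) = \tfrac12 \hat\wp'(z)/(\hat\wp(z)-\hat e_1)$, i.e.\ the Weierstrass addition theorem for $\zeta$ specialized to $a=\omega_2$ where $\hat\wp'(\omega_2)=0$; the additive constant is fixed by matching the $z^0$-term of the Laurent expansions at the origin, which cancels cleanly. For \eqref{eq_sigma_sigma}, the logarithmic derivative of the right-hand side equals $4\hat\zeta(z) + \hat\wp'(z)/(\hat\wp(z)-\hat e_1) + 2\hat e_1 z$, which by \eqref{eq_zeta_zeta} is exactly $2\zeta(z;\Gamma)$; hence the two sides agree up to a multiplicative constant, and the leading Laurent behavior $z^2 + O(z^6)$ at $z=0$ on both sides forces this constant to be $1$.

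For part \ref{p32ii}, I evaluate \eqref{eq_PP2} at $z=\omega_1/2$: since $\omega_1 \in \hat\Gamma \setminus 2\Gamma$ and $2\hat\Gamma \subseteq 2\Gamma$, the point $\omega_1/2$ is a half-period of $\hat\Gamma$, and it is not congruent to $\omega_2$ modulo $\hat\Gamma$ (else $\omega_1 \in 2\Gamma$); hence $\hat\wp(\omega_1/2) \in \{\hat e_2,\hat e_3\}$, and in either case \eqref{eq_PP2} together with $\hat e_1+\hat e_2+\hat e_3=0$ yields $e_1 = \hat e_2+\hat e_3-\hat e_1 = -2\hat e_1$. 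To derive the second identity of \eqref{eq_ee}, I evaluate \eqref{eq_PzPz} at the remaining two half-periods $\omega_2/2$ and $(\omega_1+\omega_2)/2$ of $\Gamma$; neither is a half-period of $\hat\Gamma$ (each would require $\omega_2 \in \hat\Gamma$), so $\hat\wp'\ne 0$ there, and the condition $\wp'(z_0)=0$ forces $(\hat\wp(z_0)-\hat e_1)^2 = (\hat e_2-\hat e_1)(\hat e_3-\hat e_1)$ with opposite square-root signs at the two points. Substituting back into \eqref{eq_PP2} gives $\{e_2,e_3\} = \{\hat e_1 \pm 2\sqrt{(\hat e_2-\hat e_1)(\hat e_3-\hat e_1)}\}$, whence $(e_2-e_3)^2 = 16(\hat e_2-\hat e_1)(\hat e_3-\hat e_1)$. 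Part \ref{p32iii} then follows by pure algebra: $g_2(\hat\Gamma)=2\sum\hat e_i^2$ and $g_3(\hat\Gamma)=4\prod\hat e_i$ reduce to \eqref{eq_g2} and \eqref{eq_g3} via $\hat e_1=-e_1/2$, $\hat e_2+\hat e_3=e_1/2$, and the value of $\hat e_2\hat e_3$ read off from \eqref{eq_ee}; the formula \eqref{eq_delta} follows from \eqref{eq_InvFromRoots} combined with the identity $(\hat e_2-\hat e_3)^2 = (\hat e_2+\hat e_3)^2 - 4\hat e_2\hat e_3 = (e_1-e_2)(e_1-e_3)$.

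The main obstacle will be the careful bookkeeping of additive and multiplicative constants in the derivations of \eqref{eq_zeta_zeta} and \eqref{eq_sigma_sigma} — in particular, the clean specialization of the Weierstrass addition theorem for $\hat\zeta$ at the half-period $\omega_2$ and the Laurent-expansion matching at $z=0$ that pins down both constants — whereas the principal-part arguments for the $\wp$-identities are routine once the orders of zeros and poles on $\CC/\hat\Gamma$ are correctly identified.
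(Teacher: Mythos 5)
Your proof is correct and follows essentially the same route as the paper: both rest on the coset identity $\wp(z;\Gamma)=\hat{\wp}(z)+\hat{\wp}(z+\omega_2)-\hat{\wp}(\omega_2)$ verified by comparing principal parts of $\Gamma$-elliptic functions, the half-period shift formula, and evaluation at half-periods. The only deviations are minor: you prove the shift formula and the $\hat{\zeta}$-addition step directly instead of citing them, you obtain \eqref{eq_sigma_sigma} by matching logarithmic derivatives against \eqref{eq_zeta_zeta} rather than integrating twice and squaring the $\sigma$-addition formula, and you derive $(e_2-e_3)^2=16(\hat{e}_2-\hat{e}_1)(\hat{e}_3-\hat{e}_1)$ from $\wp'=0$ at the two remaining half-periods of $\Gamma$ (the opposite square-root signs being forced by $e_2\ne e_3$) instead of the paper's squaring trick at $\omega_2/2$ alone --- all equivalent.
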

\begin{proof}
    At first note that the following relation between $\wp$ and $\hat{\wp}$ holds:
    \begin{equation}\label{eq_PP1}\
    \wp(z) = \hat{\wp}(z) + \hat{\wp}(z+\omega_2) - \hat{\wp}(\omega_2).
    \end{equation}
    It can be easily verified by checking that both the sides of the equality are $\Gamma$-elliptic and have common poles with coinciding non-positive parts of Laurent series at all poles. After that~\eqref{eq_PP1} can be rewritten as~\eqref{eq_PP2} using the equality~\cite[p.~200, Table~VI]{Akhiezer}
    \begin{equation}\label{eq_P-e2}
        \hat{\wp}(z+\omega_2) - \hat{\wp}(\omega_2) = \frac{(\hat{e}_2 - \hat{e}_1)(\hat{e}_3 - \hat{e}_1)}{\hat{\wp}(z) - \hat{e}_1}.
    \end{equation}

    The relation~\eqref{eq_PzPz} is obtained from~\eqref{eq_PP2} by differentiation. In order to prove~\eqref{eq_zeta_zeta} we integrate~\eqref{eq_PP1} and get
    \begin{equation}\label{eq_zeta_zeta_1}
        \zeta(z) = \hat{\zeta}(z) + \hat{\zeta}(z+\omega_2) + \hat{e}_1 z - \hat{\zeta}(\omega_2).
    \end{equation}
    Now the addition formula for $\hat{\zeta}$ (see \cite[Eq.~18.4.3]{AbramovitzStegun}) applied to~\eqref{eq_zeta_zeta_1} easily implies~\eqref{eq_zeta_zeta}. Finally, by integrating~\eqref{eq_zeta_zeta_1} once again we get
    $$
        \sigma(z) = \hat{\sigma}(z)\hat{\sigma}(z+\omega_2) \exp(\hat{e}_1 z^2 / 2 - \hat{\zeta}(\omega_2)z) / \hat{\sigma}(\omega_2).
    $$
    Now the equality~\eqref{eq_sigma_sigma} follows by squaring both the sides and applying addition formula~\cite[Eq.~18.4.4]{AbramovitzStegun}. That is,~\ref{p32i} is proved.

    In order to prove~\ref{p32ii} we substitute $z = \omega_1/2$ into~\eqref{eq_PP2}. It is clear that $\hat{\wp}(z)$ equals either $\hat{e}_2$, or $\hat{e}_3$. Both the cases lead to the equality $e_1 = \hat{e}_2 + \hat{e}_3 - \hat{e}_1 = -2 \hat{e}_1$. To prove the second relation from~\eqref{eq_ee} we substitute $z = \omega_2/2$ into~\eqref{eq_PP2} and get
    $$
        \wp\left(\frac{\omega_2}{2}\right) + \frac{e_1}{2} = \hat{\wp}\left(\frac{\omega_2}{2}\right) - \hat{e}_1 + \frac{(\hat{e}_2 - \hat{e}_1)(\hat{e}_3 - \hat{e}_1)}{\hat{\wp}(\omega_2/2) - \hat{e}_1}.
    $$
    It is clear that $\wp(\omega_2/2)$ equals either $e_2$, or $e_3$. It easily follows that $\wp(\omega_2/2) + e_1/2 = \pm(e_2 - e_3)/2$. Thus,
    $$
        \frac{(e_2 - e_3)^2}{4} = (\hat{\wp}\left(\frac{\omega_2}{2}\right) - \hat{e}_1)^2 + 2(\hat{e}_2 - \hat{e}_1)(\hat{e}_3 - \hat{e}_1) + \frac{(\hat{e}_2 - \hat{e}_1)^2(\hat{e}_3 - \hat{e}_1)^2}{(\hat{\wp}(\omega_2/2) - \hat{e}_1)^2}.
    $$
    Now note that substituting $z = \omega_2/2$ into~\eqref{eq_P-e2} implies $(\hat{\wp}(\omega_2/2) - \hat{e}_1)^2 = (\hat{e}_2 - \hat{e}_1)(\hat{e}_3 - \hat{e}_1)$. Finally, we get the second relation from~\eqref{eq_ee}.

    The statement~\ref{p32iii} is an elementary corollary of~\ref{p32ii} and the formulae that express $g_2(\hat{\Gamma}), g_3(\hat{\Gamma})$, and $\Delta(\hat{\Gamma})$ in terms of $\hat{e}_1, \hat{e}_2, \hat{e}_3$.
\end{proof}
\begin{remark}
    It can be easily verified that~\eqref{eq_ee}, along with the trivial equality $\hat{e}_1 + \hat{e}_2 + \hat{e}_3 = 0$, determine the numbers $\hat{e}_1, \hat{e}_2, \hat{e}_3$ completely up to the transposition of $\hat{e}_2$ and $\hat{e}_3$. Indeed, let $16r^2 = 9e_1^2 - (e_2 - e_3)^2 = 4(e_1 - e_2)(e_1 - e_3)$. Then $\hat{e}_{2,3} = e_1/4 \pm r$ are the solutions of~\eqref{eq_ee}. There are only two solutions that correspond to the choice of $r$ (or, equivalently, the transposition of $\hat{e}_2$ and $\hat{e}_3$).
\end{remark}
Proposition~\ref{p32} contains all necessary information to formulate a Landen-type method. However, it is not clear how to choose one of three subgroups of index $2$ in a given lattice. We address this problem below.
\begin{definition}
    We say that complex numbers $\omega_1, \omega_2 \in \Gamma$ constitute a reduced basis of $\Gamma$ if $$|\omega_1| = \inf \{ |w|:w \in \Gamma\setminus \{0\}\}, \;\;
	 |\omega_2| = \inf \{|w|:  w \in  \Gamma \setminus \omega_1\ZZ \}. $$
\end{definition}
It is clear that a reduced basis in $\Gamma$ indeed is a basis. In addition we note that a reduced basis in $\Gamma$ always exists. The main property of a reduced basis is given in the following theorem.
\begin{theorem}\label{tRoots}
        Let $\omega_1$,  $\omega_2$ be a reduced basis in $\Gamma$ and let
	$$e_1 = \wp\left(\frac{\omega_1}{2}\,; \Gamma\right),\;\; e_2 = \wp\left(\frac{\omega_2}{2}\,; \Gamma\right),\;\;  e_3 = \wp\left(\frac{\omega_1 + \omega_2}{2}; \Gamma\right). $$
	Then the following statements hold.
 \begin{enumerate}[label=(\roman*)]
    \item\label{tRootsi} $|e_2 - e_3| \leq |e_1 - e_3| \leq |e_1 - e_2|$.
    \item\label{tRootsii} $|e_2 - e_3| = |e_1 - e_3|$ if and only if $|\omega_1| = |\omega_2|$.
    \item\label{tRootsiii} $|e_1 - e_3| = |e_1 - e_2|$ if and only if either $|\omega_1 + \omega_2| = |\omega_2|$, or $|\omega_1 - \omega_2| = |\omega_2|$.
 \end{enumerate}
\end{theorem}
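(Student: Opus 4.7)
The plan is to reformulate part~\ref{tRootsi} as a chain of discriminant inequalities for the three index-$2$ sublattices of $\Gamma$, and then to reduce the comparison to a classical $\eta$-function estimate on the fundamental domain of $\mathrm{SL}_2(\ZZ)$.

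For $i \in \{1,2,3\}$ let $\hat\Gamma_i\subset\Gamma$ denote the unique index-$2$ subgroup containing $\omega_i$ (writing $\omega_3 := \omega_1+\omega_2$), as provided by Corollary~\ref{c31}. Applying Proposition~\ref{p32}\ref{p32iii} to $\hat\Gamma_i$ with $\omega_i$ in the role of ``$\omega_1$'', formula~\eqref{eq_delta} yields
\[
\Delta(\hat\Gamma_i) \;=\; \tfrac{1}{16}(e_i-e_j)(e_i-e_k)(e_j-e_k)^4, \qquad \{i,j,k\}=\{1,2,3\}.
\]
A direct ratio computation gives $|\Delta(\hat\Gamma_1)|/|\Delta(\hat\Gamma_2)|=|e_2-e_3|^3/|e_1-e_3|^3$ and $|\Delta(\hat\Gamma_2)|/|\Delta(\hat\Gamma_3)|=|e_1-e_3|^3/|e_1-e_2|^3$, so that \ref{tRootsi} is equivalent to $|\Delta(\hat\Gamma_1)|\leq|\Delta(\hat\Gamma_2)|\leq|\Delta(\hat\Gamma_3)|$.

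Choosing natural bases $(\omega_1,2\omega_2)$, $(2\omega_1,\omega_2)$, $(2\omega_1,\omega_1+\omega_2)$ for the three sublattices and setting $\tau := \omega_2/\omega_1$, the reduced basis condition places $\tau$ in the fundamental domain $F = \{|\tau|\geq 1,\ |\mathrm{Re}\,\tau|\leq 1/2,\ \mathrm{Im}\,\tau>0\}$ of $\mathrm{SL}_2(\ZZ)$, and the three sublattices have normalized $\tau$-parameters $2\tau, \tau/2, (1+\tau)/2$, respectively. Combining the scaling $|\Delta(\lambda L)|=|\lambda|^{-12}|\Delta(L)|$ with $|\Delta(\ZZ+\tau\ZZ)|=(2\pi)^{12}|\eta(\tau)|^{24}$, the discriminant chain becomes
\[
\sqrt{2}\,|\eta(2\tau)| \;\leq\; |\eta(\tau/2)| \;\leq\; |\eta((1+\tau)/2)|, \qquad \tau \in F.
\]
The equality cases fall out at once: the modular identity $\eta(-1/\tau)=\sqrt{-i\tau}\,\eta(\tau)$ forces $\sqrt{2}|\eta(2\tau)|=|\eta(\tau/2)|$ exactly on the arc $|\tau|=1$, matching~\ref{tRootsii} (i.e.\ $|\omega_1|=|\omega_2|$); and a short $q$-expansion check (both $q$-values at $\tau/2$ and $(1+\tau)/2$ are purely imaginary of equal modulus on $\mathrm{Re}\,\tau=\pm 1/2$, so each $|1-q^n|$ factor agrees) gives $|\eta(\tau/2)|=|\eta((1+\tau)/2)|$ on $\mathrm{Re}\,\tau=\pm 1/2$, matching~\ref{tRootsiii}.

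The main obstacle is establishing the strict inequalities in the interior of $F$. I would rewrite them via Jacobi's product identities $\vartheta_2(0|\tau)=2\eta(2\tau)^2/\eta(\tau)$, $\vartheta_4(0|\tau)=\eta(\tau/2)^2/\eta(\tau)$, and the analogous expression for $\vartheta_3(0|\tau)$ through $\eta((1+\tau)/2)$, reducing to the classical ordering $|\vartheta_2(0|\tau)|\leq|\vartheta_4(0|\tau)|\leq|\vartheta_3(0|\tau)|$ on $F$, which follows from the rapidly convergent $q$-expansions (with $|q|\leq e^{-\pi\sqrt{3}/2}$ on $F$) combined with the $\mathrm{SL}_2(\ZZ)$-action cyclically permuting the three theta squares. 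A self-contained alternative, closer to the Weierstrass spirit of this paper, would insert the product representation of $\sigma$ into the identity $\wp(u)-\wp(v)=-\sigma(u+v)\sigma(u-v)/(\sigma(u)^2\sigma(v)^2)$ at $u,v\in\{\omega_1/2,\omega_2/2,(\omega_1+\omega_2)/2\}$, where after applying the quasi-periodicity of $\sigma$ and Legendre's relation $\eta_1\omega_2-\eta_2\omega_1=2\pi i$, the dominant lattice contributions are controlled directly by $|\omega_1|\leq|\omega_2|\leq|\omega_1\pm\omega_2|$.
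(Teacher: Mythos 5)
Your reduction of part~\ref{tRootsi} to the discriminant chain $|\Delta(\hat\Gamma_1)|\le|\Delta(\hat\Gamma_2)|\le|\Delta(\hat\Gamma_3)|$ is correct (it is essentially Proposition~\ref{p33} read in the opposite logical direction from the paper, which derives that proposition \emph{from} Theorem~\ref{tRoots}), the translation into $\sqrt2\,|\eta(2\tau)|\le|\eta(\tau/2)|\le|\eta((1+\tau)/2)|$ is correct, and your boundary computations via $\eta(-1/\tau)=\sqrt{-i\tau}\,\eta(\tau)$ and conjugate $q$-values are a clean substitute for the paper's Lemmas~\ref{l33} and~\ref{l34}. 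But note that these computations prove only the ``if'' halves of \ref{tRootsii} and \ref{tRootsiii}; the ``only if'' halves, and all of \ref{tRootsi}, require the strict inequalities in the interior of the fundamental domain, and that is precisely the step your proposal does not actually supply.

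The claim that $|\vartheta_2(0|\tau)|\le|\vartheta_4(0|\tau)|\le|\vartheta_3(0|\tau)|$ on $F$ ``follows from the rapidly convergent $q$-expansions'' fails for the first inequality. Equality $|\vartheta_2|=|\vartheta_4|$ holds on the \emph{entire} arc $|\tau|=1$ (this is your own boundary computation, and it is statement~\ref{tRootsii}), where $|q|=e^{-\pi\operatorname{Im}\tau}$ is not small (e.g.\ $|q|=e^{-\pi}$ at $\tau=i$); an inequality that is saturated on an open boundary arc cannot be obtained by term-by-term tail estimates, and indeed the crude bounds point the wrong way ($|\vartheta_2|\le 2|q|^{1/4}(1+|q|^2+\cdots)\approx 1.02$ versus $|\vartheta_4|\ge 1-2|q|-\cdots\approx 0.87$ at $|q|=e^{-\pi\sqrt3/2}$). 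The same degeneration occurs for the second inequality at the corner $\tau=e^{i\pi/3}$, where all three theta-constants have equal modulus. The appeal to the $\mathrm{SL}_2(\ZZ)$-action does not close the gap: it permutes $\vartheta_2^4,\vartheta_3^4,\vartheta_4^4$ only up to automorphy factors and only relates values at the \emph{different} points $\tau$, $\tau+1$, $-1/\tau$, so it yields no pointwise comparison on $F$. To finish along your lines one is forced back to the paper's mechanism: the quotients $\vartheta_2^4/\vartheta_4^4$ and $\vartheta_4^4/\vartheta_3^4$ are (up to sign) exactly the paper's modular functions $g$ and $f$, holomorphic on $\Phi$, with the right limit at the cusp and modulus $\le1$ on $\partial\Phi$ by your boundary work, whence the maximum modulus principle gives the strict interior inequality and with it the ``only if'' directions. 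Your second alternative (the $\sigma$-quotient identity plus Legendre's relation) is a statement of intent rather than an argument; ``the dominant lattice contributions are controlled directly'' is precisely where the difficulty lives, again because the inequality degenerates to equality on a whole boundary arc.
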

We will prove several lemmas prior to dealing with this theorem. Let $\Phi$ denote the set $\{z \in \CC: |z| > 1, |\mathrm{Re}\: z| < 1/2\}$. The following lemma is trivial and we omit its proof.
\begin{lemma}\label{l31}
    Let $\mathrm {Im}\:\tau > 0$ and $\Gamma = \Span \{1, \tau\}$. Then $\tau \in \overline{\Phi}$ if and only if $1,\tau$ is a reduced basis in~$\Gamma$.
\end{lemma}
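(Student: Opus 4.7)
The plan is to unpack the definition of reduced basis into a set of elementary inequalities on integer linear combinations $m+n\tau$, and then verify each direction of the equivalence by case analysis on the integer $n$.

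First I would observe that, with $\omega_1=1$ and $\omega_2=\tau$, the pair $(1,\tau)$ is automatically a $\ZZ$-basis of $\Gamma=\Span\{1,\tau\}$ (since $\mathrm{Im}\,\tau>0$), and it is a reduced basis precisely when the two conditions
(A) $|m+n\tau|\geq 1$ for every $(m,n)\in\ZZ^2\setminus\{(0,0)\}$, and
(B) $|m+n\tau|\geq|\tau|$ for every $(m,n)$ with $n\neq 0$,
both hold. The implication ``reduced basis $\Rightarrow \tau\in\overline{\Phi}$'' is then immediate: applying (A) to the lattice vector $w=\tau$ gives $|\tau|\geq 1$, while applying (B) to $w=\tau\pm 1$ and squaring $|\tau\pm 1|\geq|\tau|$ yields $|\tau|^2\pm 2\mathrm{Re}\,\tau+1\geq|\tau|^2$, whence $|\mathrm{Re}\,\tau|\leq 1/2$.

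For the converse I would assume $\tau\in\overline{\Phi}$, so that $|\tau|\geq 1$, $|\mathrm{Re}\,\tau|\leq 1/2$, and consequently $(\mathrm{Im}\,\tau)^2 = |\tau|^2-(\mathrm{Re}\,\tau)^2\geq 3/4$. Using the identity
\begin{equation*}
|m+n\tau|^2 = (m+n\mathrm{Re}\,\tau)^2 + n^2(\mathrm{Im}\,\tau)^2,
\end{equation*}
I would split on $|n|$. The case $n=0$ is trivial. For $|n|=1$, the bound $|\mathrm{Re}\,\tau|\leq 1/2$ forces the minimum of $|m+n\tau|$ over $m\in\ZZ$ to be attained at $m=0$, giving minimum value $|\tau|\geq 1$ and thus verifying both (A) and (B). For $|n|\geq 2$, the crude estimate $|m+n\tau|^2\geq n^2(\mathrm{Im}\,\tau)^2 \geq 4\cdot(3/4) = 3$ immediately handles (A), while (B) reduces to $(n^2-1)(\mathrm{Im}\,\tau)^2 \geq (\mathrm{Re}\,\tau)^2$, automatic from $n^2-1\geq 3$, $(\mathrm{Im}\,\tau)^2\geq 3/4$, and $(\mathrm{Re}\,\tau)^2\leq 1/4$. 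There is no substantive obstacle here; the only care required is a correct case split, which is why the authors are content to call the lemma trivial and omit the proof.
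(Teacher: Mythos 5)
Your proof is correct and complete. The paper explicitly declares this lemma trivial and omits its proof, and your argument---translating the reduced-basis condition into the inequalities $|m+n\tau|\ge 1$ and $|m+n\tau|\ge|\tau|$ for $n\ne 0$, extracting $|\tau|\ge 1$ and $|\mathrm{Re}\,\tau|\le 1/2$ from the vectors $\tau$ and $\tau\pm 1$, and handling the converse by the case split on $|n|$ using $(\mathrm{Im}\,\tau)^2\ge 3/4$---is exactly the standard verification the authors had in mind.
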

It is clear that in Theorem~\ref{tRoots} we can assume without loss of generality that $\omega_1 = 1$ and $\mathrm{Im}\:\omega_2 > 0$. Thus, Lemma~\ref{l31} implies that $\omega_2 \in \overline{\Phi}$. Now let $\Gamma(\tau)$ denote $\Span\{1, \tau\}$, $\mathrm{Im}\: \tau > 0$. Then we introduce $e_1(\tau), e_2(\tau), e_3(\tau)$ via formulae
$$e_1(\tau) = \wp(1/2;\Gamma(\tau)),\;\; e_2(\tau) = \wp(\tau/2;\Gamma(\tau)),\;\;e_3(\tau) = \wp((\tau + 1)/2; \Gamma(\tau)).$$
Also let
$$f(\tau) = \frac{e_1(\tau) - e_3(\tau)}{e_1(\tau) - e_2(\tau)}, \; \; \; g(\tau) = \frac{e_2(\tau) - e_3(\tau)}{e_1(\tau) - e_3(\tau)}.$$
It is clear that $f$ and $g$ are holomorphic in the upper half-plane. Moreover, statement~\ref{tRootsi} of Theorem~\ref{tRoots} can be now reformulated as: $|f(\tau)| \le 1$ and $|g(\tau)| \le 1$ for $\tau \in \overline{\Phi}$. Analogously, the statements~\ref{tRootsii} and~\ref{tRootsiii} can be reformulated in terms of $f$ and $g$.

\begin{remark}
	The function $f$ is a well-known example of a modular function (see, e.g. \cite[Chapter~III]{AhlforsQuasiConf}), and $g$ is related to $f$ by a Mobius transformation $g = 1 - 1/f$. It is known that the function $f$ maps the region $\{z \in \CC: 0 < \mathrm{Re} z < 1,\;\; |z - 1/2| > 1/2\}$ conformally onto the lower half-plane. On the domain $\Phi$ the functions $f$ and $g$ also turn out to be conformal mappings. Using Lemma~\ref{l34} below it is possible to find the images $f(\Phi)$ and $g(\Phi)$. Indeed, it is clear that $|g(\tau)| = 1$ if and only if $\mathrm{Re}f(\tau) = 1/2$ and $|f(\tau)| = 1$ if and only if $|g(\tau) - 1| = 1$. Thus, $f(\Phi) = \{z \in \CC: |z| < 1,\;\; \mathrm{Re}z > 1/2\}$ and $g(\Phi) = \{z \in \CC: |z| < 1,\;\; |z - 1| > 1\}$.
\end{remark}

\begin{lemma}\label{l32}
    As $\mathrm{Im}\: \tau \rightarrow +\infty$ the functions $f(\tau)$ and $g(\tau)$ converge to $1$ and $0$ respectively uniformly with respect to $\mathrm{Re}\: \tau$.
\end{lemma}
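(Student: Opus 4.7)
The strategy is to show that the lattice $\Gamma(\tau)=\Span\{1,\tau\}$ degenerates, as $\mathrm{Im}\,\tau\to+\infty$, to the rank-$1$ subgroup $\ZZ$, so that the associated Weierstrass data converge to the elementary expressions of Section~\ref{sec:preliminaries} with $\omega=1$: $g_2(\ZZ)=4\pi^4/3$, $g_3(\ZZ)=8\pi^6/27$, and the roots $e_1=2\pi^2/3$, $e_2=e_3=-\pi^2/3$. It is enough to prove that $e_1(\tau)\to 2\pi^2/3$, $g_2(\Gamma(\tau))\to 4\pi^4/3$ and $g_3(\Gamma(\tau))\to 8\pi^6/27$, all uniformly in $\mathrm{Re}\,\tau$; the conclusion for $f$ and $g$ will then follow by Vieta-type algebra.

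The first step is to estimate these three lattice sums directly by splitting according to whether the coefficient $n$ of $\tau$ vanishes. The $n=0$ portion of each sum reproduces exactly the corresponding quantity for the rank-$1$ group $\ZZ$ and supplies the limiting value. For the tail $n\neq 0$, the trivial bound $|m+n\tau|\ge |n|\,\mathrm{Im}\,\tau$, combined with an integral comparison in $m$, yields
$$
    \sum_{n\neq 0}\sum_{m\in\ZZ}\frac{1}{|m+n\tau|^{s}} \;=\; O\!\left((\mathrm{Im}\,\tau)^{1-s}\right) \quad\text{for every } s>2,
$$
uniformly in $\mathrm{Re}\,\tau$. Applied with $s=4$ and $s=6$ to~\eqref{gw}, this controls the tails of $g_2$ and $g_3$. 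For $e_1(\tau)=\wp(1/2;\Gamma(\tau))$ the Weierstrass summand satisfies $(1/2-\omega)^{-2}-\omega^{-2}=O(|\omega|^{-3})$ for $|\omega|\ge 1$, and for $\mathrm{Im}\,\tau>1$ every point of $\Gamma(\tau)\setminus\ZZ$ lies outside the unit disk, so the $n\neq 0$ tail is again $O((\mathrm{Im}\,\tau)^{-2})$.

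The second step deduces the claim from Vieta's relations for the cubic $4x^3-g_2 x-g_3$. The identities $e_1+e_2+e_3=0$ and $e_2 e_3 = e_1^2 - g_2/4$ give
$$
    (e_2-e_3)^2 \;=\; (e_2+e_3)^2 - 4\,e_2 e_3 \;=\; -3\,e_1^2 + g_2(\Gamma(\tau)),
$$
which by the uniform limits already established tends to $-3(2\pi^2/3)^2+4\pi^4/3=0$ uniformly; hence $|e_2-e_3|\to 0$ uniformly. Likewise $(e_1-e_2)+(e_1-e_3)=3\,e_1\to 2\pi^2$ and $(e_1-e_2)(e_1-e_3)=3\,e_1^2 - g_2(\Gamma(\tau))/4\to\pi^4$, so both $e_1-e_2$ and $e_1-e_3$ tend to $\pi^2$ uniformly and are in particular bounded away from zero. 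Dividing gives $f(\tau)\to 1$ and $g(\tau)\to 0$ uniformly in $\mathrm{Re}\,\tau$.

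The only technical point is the uniformity in $\mathrm{Re}\,\tau$ of the tail bounds, but this is automatic: once $|m+n\tau|\ge |n|\,\mathrm{Im}\,\tau$ is invoked, the remaining one-dimensional sums depend on $\mathrm{Im}\,\tau$ alone. No delicate cancellation is required because the leading Laurent terms of the Weierstrass summands cancel by construction of the series.
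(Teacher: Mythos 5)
Your proof is correct, but it takes a genuinely different route from the paper. The paper simply quotes the $q$-expansions of $e_1(\tau),e_2(\tau),e_3(\tau)$ (series in $\exp(2\pi i\tau k)$, from Akhiezer's tables) and reads off that $|e_2-e_3|\to 0$ while $|e_1-e_2|$ and $|e_1-e_3|$ tend to a common nonzero constant, uniformly in $\mathrm{Re}\,\tau$, since every exponential term decays uniformly as $\mathrm{Im}\,\tau\to+\infty$. You instead work directly from the defining lattice sums~\eqref{gw} and the Weierstrass series for $\wp(1/2;\Gamma(\tau))$: the $n=0$ slice reproduces the rank-$1$ data for $\ZZ$, the $n\neq 0$ tail is killed by the bound $|m+n\tau|\ge |n|\,\mathrm{Im}\,\tau$ together with an integral comparison (all sums involved converge absolutely, so the regrouping is legitimate), and Vieta's relations convert the uniform limits $e_1\to 2\pi^2/3$, $g_2\to 4\pi^4/3$ into $(e_2-e_3)^2\to 0$ and $e_1-e_2,\,e_1-e_3\to\pi^2$, whence $f\to 1$, $g\to 0$. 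Your computations check out (note the apparent discrepancy with the paper's limiting value $\pi^2/4$ is only a period-normalization issue in the cited table and is irrelevant to the ratios $f,g$). What each approach buys: yours is fully self-contained, requires no special-function identities, and is insensitive to the labelling of $e_2$ versus $e_3$ since only symmetric functions of that pair are used; the paper's is shorter and implicitly gives the much stronger exponential rate $O(e^{-\pi\,\mathrm{Im}\,\tau})$ rather than your polynomial $O((\mathrm{Im}\,\tau)^{-2})$. Since the lemma is used only qualitatively (as input to Lemma~\ref{l34} and the maximum modulus argument), the weaker rate is immaterial here.
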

\begin{proof}
We use the series expansions of $e_j(\tau)$ (see \cite[p.~204, Table~X]{Akhiezer}). Let $\eta(\tau) = 2\zeta(1/2; \Gamma(\tau)).$ Then the following relations hold:
$$
e_1(\tau) = -\eta(\tau) + \pi^2\left(\frac{1}{4} + 2 \sum_{k = 1}^{\infty}\frac{\exp{2\pi i \tau k}}{(1+\exp{2\pi i \tau k})^2} \right),
$$
$$
e_2(\tau) = -\eta(\tau) + 2\pi^2 \sum_{k = 1}^{\infty}\frac{\exp{2\pi i \tau (2k-1)}}{(1+\exp{\pi i \tau (2k-1)})^2},
$$
$$
e_3(\tau) = -\eta(\tau) - 2\pi^2 \sum_{k = 1}^{\infty}\frac{\exp{2\pi i \tau (2k-1)}}{(1+\exp{\pi i \tau (2k-1)})^2}.
$$
It is clear from these expansions that $|e_2(\tau) - e_3(\tau)| \rightarrow 0$, $|e_1(\tau) - e_2(\tau)|, |e_1(\tau) - e_3(\tau)| \rightarrow \pi^2/4$ uniformly with respect to $\mathrm{Re}\: \tau$ as $\mathrm{Im}\:\tau \rightarrow +\infty$. 
\end{proof}

Now we consider the behaviour of $f$ and $g$ at the boundary of $\Phi$. The main tool is the following lemma.
\begin{lemma}\label{l33}
Let $\Gamma$ be self-conjugate (that is, $w \in \Gamma$ if and only if $\bar{w} \in \Gamma$), and let $\omega_1, \omega_2 \in \Gamma$ denote a basis such that $\omega_2 = \overline{\omega_1}$. Let $e_1 = \wp(\omega_1/2; \Gamma), e_2 = \wp(\omega_2/2; \Gamma)$, and $e_3 = \wp((\omega_1+\omega_2)/2; \Gamma)$. Then $e_2 = \overline{e_1}$ and $e_3 \in \RR$. In particular, $|e_1 - e_3| = |e_2 - e_3|$.
\end{lemma}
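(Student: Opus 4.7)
The plan is to exploit the symmetry of the series defining $\wp$ under complex conjugation when the lattice is self-conjugate. Specifically, I would first establish the functional identity
\begin{equation*}
    \overline{\wp(z;\Gamma)} = \wp(\overline{z};\Gamma) \quad \text{whenever } \overline{\Gamma} = \Gamma.
\end{equation*}
This follows by taking complex conjugates term-by-term in the defining series
\begin{equation*}
    \wp(z;\Gamma) = \frac{1}{z^2} + \sum_{u \in \Gamma \setminus \{0\}}\left(\frac{1}{(z-u)^2} - \frac{1}{u^2}\right),
\end{equation*}
yielding $\overline{\wp(z;\Gamma)} = 1/\overline{z}^2 + \sum_{u}\bigl(1/(\overline{z}-\overline{u})^2 - 1/\overline{u}^2\bigr)$, and then using that $u \mapsto \overline{u}$ is a bijection of $\Gamma$ to itself (by self-conjugacy) to reindex the sum as $\wp(\overline{z};\Gamma)$. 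The absolute convergence of the series (after the subtraction of the $1/u^2$ terms) justifies the reindexing.

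Next, I would apply this identity to each of the three half-period values. Since $\omega_2 = \overline{\omega_1}$, we get $\overline{e_1} = \overline{\wp(\omega_1/2)} = \wp(\overline{\omega_1}/2) = \wp(\omega_2/2) = e_2$, giving the first claim. For $e_3$, note that $(\omega_1 + \omega_2)/2 = (\omega_1 + \overline{\omega_1})/2 = \mathrm{Re}\,\omega_1 \in \RR$, so $\overline{e_3} = \wp(\overline{(\omega_1+\omega_2)/2}) = \wp((\omega_1+\omega_2)/2) = e_3$, showing $e_3 \in \RR$.

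Finally, the modulus equality is automatic: since $e_3$ is real and $e_2 = \overline{e_1}$,
\begin{equation*}
    |e_2 - e_3| = |\overline{e_1} - \overline{e_3}| = \overline{|e_1 - e_3|}\cdot\text{(unit)} = |e_1 - e_3|,
\end{equation*}
or more cleanly, $|e_2 - e_3| = |\overline{e_1 - e_3}| = |e_1 - e_3|$. There is no real obstacle here; the entire argument rests on the one observation that self-conjugacy of $\Gamma$ lets us commute conjugation past the $\wp$ symbol. The only thing to be mildly careful about is justifying the termwise conjugation of the $\wp$ series, but this is standard because the series converges absolutely once the subtraction $-1/u^2$ is included.
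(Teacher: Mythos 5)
Your proof is correct and follows essentially the same route as the paper, which likewise deduces everything from the identity $\wp(z;\Gamma)=\overline{\wp(\bar z;\Gamma)}$ for self-conjugate $\Gamma$; you simply spell out the termwise-conjugation and reindexing argument that the paper leaves implicit. No issues.
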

\begin{proof}
    The statement elementary follows from the equality $\wp(z; \Gamma) = \overline{\wp(\bar{z}; \Gamma)}$.
\end{proof}
\begin{lemma}\label{l34}
    Assume that $\tau \in \partial \Phi$.
    \begin{enumerate}[label=(\roman*)]
        \item\label{l34i} If $|\tau| = 1$, then $|g(\tau)| = 1$ and $|f(\tau)| \le 1$. Moreover, if $|\mathrm{Re}\:\tau| \ne 1/2$, then $|f(\tau)| < 1$.
        \item\label{l34ii} If $|\mathrm{Re} \:\tau| = 1/2$, then $|f(\tau)| = 1$ and $|g(\tau)| \le 1$. Moreover, if $|\tau| > 1$, then $|g(\tau)| < 1$.
    \end{enumerate}
\end{lemma}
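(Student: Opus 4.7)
My plan divides into two parts: first establish the equalities ($|g(\tau)|=1$ on the arc and $|f(\tau)|=1$ on the vertical lines), then deduce the inequalities by a continuity argument built on an algebraic rigidity observation. The rigidity observation is the main conceptual step; the rest is either functional-equation bookkeeping or a routine application of continuity.

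For the equalities, I would verify three functional equations for $f$ directly from the definition of $\wp$: (A) $f(\tau+1)=1/f(\tau)$, since $\Gamma(\tau+1)=\Gamma(\tau)$ while the translation permutes $e_2(\tau)$ and $e_3(\tau)$; (B) $f(-\bar\tau)=\overline{f(\tau)}$, since $\overline{\Gamma(-\bar\tau)}=\Gamma(\tau)$ and $\wp(z;\bar\Lambda)=\overline{\wp(\bar z;\Lambda)}$; (C) $f(-1/\tau)=1-f(\tau)$, from $\tau\cdot\Gamma(-1/\tau)=\Gamma(\tau)$ via the homogeneity $\wp(z;\lambda\Gamma)=\lambda^{-2}\wp(z/\lambda;\Gamma)$ together with the consequent swap of $e_1$ and $e_2$. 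On the vertical line $\mathrm{Re}\,\tau=\pm 1/2$ we have $-\bar\tau=\tau\mp 1$, so (A) and (B) give $\overline{f(\tau)}=1/f(\tau)$, hence $|f(\tau)|=1$. On the arc $|\tau|=1$ we have $-\bar\tau=-1/\tau$, so (B) and (C) give $\overline{f(\tau)}=1-f(\tau)$, hence $\mathrm{Re}\,f(\tau)=1/2$; substituting into $g=1-1/f$ then yields $|g(\tau)|=1$ by a one-line calculation. (The same equalities can be obtained directly from Lemma~\ref{l33} applied to the self-conjugate lattices $\tau^{-1/2}\Gamma(\tau)$ on the arc and $\Gamma(\tau)$ with basis $\{\tau,1-\tau\}$ on the vertical, but the functional-equation route is quicker.)

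For the inequalities, the key is the algebraic rigidity statement: \emph{if $|f(\tau)|=|g(\tau)|=1$ simultaneously, then $g_2(\Gamma(\tau))=0$.} Indeed, the two unit-modulus conditions force $|e_1-e_2|=|e_1-e_3|=|e_2-e_3|$, so $e_1,e_2,e_3$ are the vertices of an equilateral triangle, and since $e_1+e_2+e_3=0$ the centroid is the origin; parametrizing the vertices as $Re^{i(\theta+2\pi k/3)}$ yields $e_1e_2+e_1e_3+e_2e_3=0$, i.e., $g_2=0$. Within $\overline\Phi$ this singles out only the corners $\tau=e^{\pm i\pi/3}$. Now suppose, for contradiction, that $|f(\tau_0)|>1$ at a non-corner $\tau_0$ on the arc. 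The function $|f|$ is continuous on the arc with $|f(e^{\pm i\pi/3})|=1$ and $|f(i)|=1/2$ (the latter from the $i$-symmetry of the square lattice, which forces $e_3(i)=0$ and $e_2(i)=-e_1(i)$, hence $f(i)=e_1/(2e_1)=1/2$); the intermediate value theorem then produces a non-corner $\tau_1$ on the arc with $|f(\tau_1)|=1$. But the arc already gives $|g(\tau_1)|=1$, so the rigidity lemma forces $\tau_1$ to be a corner---contradiction. The inequality $|g|\le 1$ on the vertical line follows by an analogous argument, using $|g|\to 0$ as $\mathrm{Im}\,\tau\to\infty$ (Lemma~\ref{l32}) to produce the requisite non-corner point $\tau_1$ with $|g(\tau_1)|=1$. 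Strict inequality away from corners is automatic in this scheme, since the rigidity lemma forces any equality point to be hexagonal, hence a corner.
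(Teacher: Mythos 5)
Your proof is correct and follows essentially the same strategy as the paper's: characterize the points where $|f|=|g|=1$ simultaneously as the two corners of $\overline{\Phi}$ (hexagonal lattices), establish the equalities $|g|=1$ on the arc and $|f|=1$ on the vertical lines by a symmetry argument, and fix the direction of the inequalities by continuity together with the reference values $|f(i)|=1/2$ and $|g(\tau)|\to 0$ as $\mathrm{Im}\,\tau\to\infty$. The only cosmetic difference is that you derive the boundary equalities from the modular functional equations of $f$ rather than from Lemma~\ref{l33} applied to self-conjugate lattices (a route you also note); as in the paper, the one assertion left at the ``easy to verify'' level is that $g_2(\Gamma(\tau))=0$ occurs in $\overline{\Phi}$ only at the corners $e^{i\pi/3}$ and $e^{2i\pi/3}$ (your ``$e^{\pm i\pi/3}$'' is presumably a slip for these).
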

\begin{proof}
    At first we note that if $\tau \in \partial \Phi$ and $|g(\tau)| = |f(\tau)| = 1$, then either $\tau = \exp(i\pi/3)$, or $\tau = \exp(2i\pi/3)$. Indeed, equality $|g(\tau)| = |f(\tau)| = 1$ means that $|e_1(\tau) - e_3(\tau)| = |e_2(\tau) - e_3(\tau)| = |e_1(\tau) - e_2(\tau)|$. It is easy to verify that a lattice $\Gamma$ such that the roots of the corresponding polynomial form an equilateral triangle satisfies $\Gamma = \exp(i\pi/3)\Gamma$. Since we consider lattices of the form $\Gamma(\tau)$, $\tau \in \partial \Phi$ the statement easily follows.

    Now we prove~\ref{l34i}. It is clear that $\tau = \exp(i \alpha), \alpha\in [\pi/3, 2\pi/3]$. It is clear that $\hat{\Gamma} = \exp(-i\alpha/2)\Gamma(\tau)$ is self-conjugate and Lemma~\ref{l33} implies $|e_1 - e_3| = |e_2 - e_3|$, so $|g(\tau)| = 1$. Note that $|f(\tau(\alpha))|$ is continuous on the interval $[\pi/3, 2\pi/3]$ and by the statement above $|f(\tau(\alpha))| = 1$ only at the endpoints. Thus, either $|f(\tau(\alpha))| \ge 1$, or $|f(\tau(\alpha))| \le 1$ on the whole interval $[\pi/3, 2\pi/3]$. To check which one is true it is enough to consider an arbitrary point for which the computations can be done explicitly, for example, $\alpha = \pi/2$. In this case, clearly, $e_1(\tau) = -e_2(\tau)$ and $e_3(\tau) = 0$. Thus, $|f(\tau)| = 1/2 < 1$.

    Now we prove~\ref{l34ii}. Assume that $|\mathrm{Re}\:\tau| = 1/2$. It is easy to see that $\hat{\Gamma} = i \Gamma(\tau)$ is self-conjugate. Similarly to the previous statement we conclude that $|e_1 - e_3| = |e_1 - e_2|$ and $|f(\tau)| = 1$. Again, only one of the inequalities $|g(\tau)| \ge 1$ and $|g(\tau)| \le 1$ holds for all $\tau \in \partial \Phi$, $\mathrm{Re}\:\tau = 1/2$, since the function $|g(\tau)|$ is continuous and the set of $\tau$ under consideration is connected. In this case Lemma~\ref{l32} implies that the inequality $|g(\tau)| \le 1$ holds for $\tau \in \partial \Phi$, $\mathrm{Re}\:\tau = 1/2$. The other part of the boundary where $\mathrm{Re}\:\tau = -1/2$ is treated similarly.
\end{proof}

\begin{proof}[Proof of the Theorem~\ref{tRoots}]
    Note that Lemmas~\ref{l32} and~\ref{l34} combined with the maximum modulus principle imply $|f(\tau)| \le 1$ and $|g(\tau)| \le 1$ for $\tau \in \overline{\Phi}$. This proves~\ref{tRootsi}. Moreover $|g(\tau)| < 1$ and $|f(\tau)| < 1$, if $\tau \in \Phi$ (also by the maximum modulus principle). Using Lemma~\ref{l34}~\ref{l34i} we get that for $\tau \in \overline{\Phi}$ the equality $|f(\tau)| = 1$ is equivalent to $|\mathrm{Re}\:\tau| = 1/2$. Since in this case either $|\tau + 1| = |\tau|$, or $|\tau - 1| = |\tau|$, we get~\ref{tRootsii}. Finally, for $\tau \in \overline{\Phi}$ Lemma~\ref{l34}~\ref{l34ii} implies that $|g(\tau)| = 1$ if and only if $|\tau| = 1$. This completes the proof of~\ref{tRootsiii}.
\end{proof}
From now on we will call a triple of complex numbers $(e_1,e_2,e_3)$ {\it{properly ordered}} if $|e_2 - e_3| \leq |e_1 - e_3| \leq |e_1 - e_2|$. Usually a properly ordered triple of numbers cannot be reordered with preservation of properness. However, each possible proper order of the roots of polynomial $4x^3 - g_2(\Gamma)x - g_3(\Gamma)$ corresponds to a reduced basis in $\Gamma$.
\begin{corollary}\label{ctRoots}
    Let $(e_1,e_2,e_3)$ be a properly ordered triple of distinct roots of the polynomial $4x^3 - g_2(\Gamma)x - g_3(\Gamma)$. Then there exists a reduced basis $\omega_1, \omega_2$ in $\Gamma$ such that
	$$e_1 = \wp\left(\frac{\omega_1}{2}; \Gamma\right),\;\; e_2 = \wp\left(\frac{\omega_2}{2}; \Gamma\right),\;\;  e_3 = \wp\left(\frac{\omega_1 + \omega_2}{2}; \Gamma\right). $$
\end{corollary}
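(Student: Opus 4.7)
The plan is to start from any reduced basis $\omega_1',\omega_2'$ of $\Gamma$ and modify it so that the triple of $\wp$-values at half-periods matches the prescribed $(e_1,e_2,e_3)$. Writing $f_1=\wp(\omega_1'/2)$, $f_2=\wp(\omega_2'/2)$, $f_3=\wp((\omega_1'+\omega_2')/2)$, the preliminaries give $\{f_1,f_2,f_3\}=\{e_1,e_2,e_3\}$, and Theorem~\ref{tRoots}~\ref{tRootsi} says that $(f_1,f_2,f_3)$ is itself properly ordered. Let $\sigma\in S_3$ be the permutation with $e_i=f_{\sigma(i)}$; the goal is to realize $\sigma$ via a reduced basis of $\Gamma$.

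Two elementary moves do the job. The swap $(\omega_2',\omega_1')$ produces the triple $(f_2,f_1,f_3)$ and so realizes the transposition $(1\;2)$. Replacing $\omega_2'$ by $\omega_1'\pm\omega_2'$ produces the triple $(f_1,f_3,f_2)$ in either case (using $\wp(-z)=\wp(z)$ and $\omega_1'$-periodicity), so it realizes $(2\;3)$. Composing these generates every $\sigma\in S_3$, so the real question is whether the move needed in each case actually preserves the reduced-basis property.

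This is settled by a short case analysis on the equalities in the proper ordering of $(e_1,e_2,e_3)$. If all inequalities are strict, comparing pairwise gaps $|e_i-e_j|$ with $|f_i-f_j|$ forces $\sigma=\mathrm{id}$, and the original basis already works. If $|e_2-e_3|=|e_1-e_3|$, then $\sigma\in\{\mathrm{id},(1\;2)\}$; moreover Theorem~\ref{tRoots}~\ref{tRootsii} applied to $(\omega_1',\omega_2')$ gives $|\omega_1'|=|\omega_2'|$, whence $(\omega_2',\omega_1')$ is a reduced basis performing the $(1\;2)$ swap. If $|e_1-e_3|=|e_1-e_2|$, then $\sigma\in\{\mathrm{id},(2\;3)\}$, and Theorem~\ref{tRoots}~\ref{tRootsiii} gives $|\omega_1'+\omega_2'|=|\omega_2'|$ or $|\omega_1'-\omega_2'|=|\omega_2'|$, so the corresponding $(\omega_1',\omega_1'\pm\omega_2')$ is a reduced basis performing the $(2\;3)$ swap. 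When both equalities hold simultaneously, both moves are simultaneously available and their compositions cover the three-cycles as well.

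The only non-routine point is verifying that the modified basis is reduced in the precise sense of the definition, i.e.\ that its second vector attains the minimum of $|w|$ over $w\in\Gamma$ outside the integer span of the first. This is immediate, however, because in each case that vector has norm equal to $|\omega_2'|$ by the relevant equality from Theorem~\ref{tRoots}~\ref{tRootsii} or~\ref{tRootsiii}, and $|\omega_2'|$ is already this minimum by the reducedness of the original basis; all remaining verifications reduce to direct computations with $\wp$.
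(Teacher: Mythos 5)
Your proposal is correct and follows essentially the same route as the paper: start from an arbitrary reduced basis, observe that its half-period triple is properly ordered by Theorem~\ref{tRoots}~\ref{tRootsi} and hence is a permutation of $(e_1,e_2,e_3)$ with matching pairwise gaps, and then use the equality cases \ref{tRootsii} and \ref{tRootsiii} to justify the swap $(\omega_2',\omega_1')$ and the replacement $\omega_2'\mapsto\omega_1'\pm\omega_2'$ as reduced-basis-preserving moves realizing the needed permutation (with the equilateral case treated separately). The paper organizes the same case analysis by which $f_j$ equals $e_1$ rather than by the permutation $\sigma$, but the content is identical.
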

\begin{proof}
Consider arbitrary reduced basis $\phi_1,\phi_2$ in $\Gamma$. Let 
$$
f_1 = \wp\left(\frac{\phi_1}{2}; \Gamma\right),\;\; f_2 = \wp\left(\frac{\phi_2}{2}; \Gamma\right),\;\;  f_3 = \wp\left(\frac{\phi_1 + \phi_2}{2}; \Gamma\right). 
$$
Obviously $(f_1,f_2,f_3)$ is a shuffle of numbers $(e_1,e_2,e_3)$, and by Theorem~\ref{tRoots} it is properly ordered. Now it is clear that $|f_2 - f_3| = |e_2 - e_3|$, $|f_1 - f_3| = |e_1 - e_3|$, and $|f_1 - f_2| = |e_1 - e_2|$.

    If $e_1 = f_3$, then $e_1,e_2,e_3$ form an equilateral triangle and this case is handled trivially, since $\Gamma$ admits the basis $\omega_1, \omega_2$, where all three periods $\omega_1, \omega_2$, and $\omega_1 + \omega_2$ have the same (minimal) absolute value.

    Now assume that $e_1 = f_2$. Then is it clear that $|f_2 - f_3| = |f_1 - f_3|$ and Theorem~\ref{tRoots}~\ref{tRootsii} implies that $|\phi_1| = |\phi_2|$. Now, since $\phi_1$ and $\phi_2$ are interchangeable, we can only consider  the case $e_1 = f_1$. If $e_2 = f_2$, then the basis $\phi_1, \phi_2$ satisfies the requirements. Finally, we consider the case $e_2 = f_3$, so we get $|f_1 - f_3| = |f_1 - f_2|$ and by Theorem~\ref{tRoots}~\ref{tRootsiii} we get either $|\phi_2 + \phi_1| = |\phi_2|$, or $|\phi_2 - \phi_1| = |\phi_2|$. Now the required basis is $\omega_1 = \phi_1$ and $\omega_2 = \phi_2 \pm \phi_1$.
\end{proof}

Now we can conclude that if we identify different choices of basis that are obtained by changing the sign of vectors we have as many reduced bases in $\Gamma$ as there exists ways to properly order the roots of the polynomial $4x^3 - g_2(\Gamma)x - g_3(\Gamma)$. Moreover, in different reduced bases of the same lattice the absolute values of basis vectors are the same.

\begin{definition}
    We say that a subgroup $\Gamma'$ of index $2$ in $\Gamma$ is optimal if $|\Delta(\Gamma')| \le |\Delta(G)|$ for all subgroups $G \subset \Gamma$ of index $2$.
\end{definition}
 \begin{proposition}\label{p33}
    Let $\Gamma'$ be a subgroup of index $2$ in $\Gamma$. Then the following statements hold.
    \begin{enumerate}[label=(\roman*)]
        \item\label{p33i} Let $e_1,e_2,e_3$ be (distinct) roots of the polynomial  $4 x^3 - g_2 x - g_3$, and let $e_1 = \wp(\omega/2; \Gamma)$. Then $\Gamma'$ is optimal if and only if $|e_2 - e_3| \le \min\{|e_1 - e_2|, |e_1 - e_3|\}$.
        \item\label{p33ii} $\Gamma'$ is optimal if and only if there is $\omega \in \Gamma$ such that $\omega \in \Gamma'$ and $|\omega| = \inf \{|w|: w \in \Gamma \setminus \{0\}\}$.
    \end{enumerate}
 \end{proposition}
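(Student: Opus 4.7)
The plan is to leverage the explicit formula~\eqref{eq_delta} for the discriminant of an index-$2$ sublattice. By Corollary~\ref{c31} the three subgroups of index $2$ in $\Gamma$ are in bijection with the three roots $e_1, e_2, e_3$ of $4x^3 - g_2(\Gamma)x - g_3(\Gamma)$: the root $e_j = \wp(\omega_j/2; \Gamma)$ corresponds to the sublattice $\Gamma'_j = \Span(2\Gamma \cup \{\omega_j\})$. With this labeling, Proposition~\ref{p32}\ref{p32iii} reads
\begin{equation*}
    |\Delta(\Gamma'_j)| = \tfrac{1}{16}\, |e_j - e_k|\, |e_j - e_l|\, |e_k - e_l|^4, \qquad \{j, k, l\} = \{1, 2, 3\}.
\end{equation*}

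For part~\ref{p33i} I would simply form the quotients $|\Delta(\Gamma'_1)|/|\Delta(\Gamma'_j)|$ for $j = 2, 3$; after the obvious cancellations each reduces to $(|e_2 - e_3|/|e_1 - e_l|)^3$ with $l \in \{2,3\}\setminus\{j\}$. Hence the inequality $|\Delta(\Gamma'_1)| \le |\Delta(\Gamma'_j)|$ holds for both $j = 2, 3$ if and only if $|e_2 - e_3| \le \min\{|e_1 - e_2|, |e_1 - e_3|\}$, which is precisely the condition in~\ref{p33i}.

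For~\ref{p33ii} I would combine~\ref{p33i} with the reduced-basis results from Theorem~\ref{tRoots} and Corollary~\ref{ctRoots}. For the $(\Leftarrow)$ direction, suppose $\omega \in \Gamma'$ realizes $\inf\{|w|: w \in \Gamma \setminus \{0\}\}$; then $\omega \notin 2\Gamma$ (otherwise $\omega/2 \in \Gamma$ would be strictly shorter), so by Corollary~\ref{c31} we have $\Gamma' = \Span(2\Gamma \cup \{\omega\})$. Extending $\omega$ to a reduced basis $\omega, \omega_2$ of $\Gamma$ and setting $e_1 = \wp(\omega/2)$, $e_2 = \wp(\omega_2/2)$, $e_3 = \wp((\omega + \omega_2)/2)$, Theorem~\ref{tRoots} gives $|e_2 - e_3| \le |e_1 - e_3| \le |e_1 - e_2|$, and~\ref{p33i} yields that $\Gamma'$ is optimal. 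For the $(\Rightarrow)$ direction, pick any $\omega' \in \Gamma' \setminus 2\Gamma$ and write $e_1 = \wp(\omega'/2)$; optimality of $\Gamma'$ together with~\ref{p33i} says that $(e_1, e_2, e_3)$ is properly ordered, so Corollary~\ref{ctRoots} furnishes a reduced basis $\omega_1, \omega_2$ of $\Gamma$ with $e_1 = \wp(\omega_1/2)$.

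The one bookkeeping step that deserves care is concluding $\omega_1 \in \Gamma'$ from $\wp(\omega_1/2) = \wp(\omega'/2) = e_1$. Since $\wp$ separates the three nonzero cosets of $\Gamma$ in $\tfrac{1}{2}\Gamma$ and is even, this equality forces $\omega_1 \equiv \pm \omega' \pmod{2\Gamma}$, so $\omega_1 \in \Span(2\Gamma \cup \{\omega'\}) = \Gamma'$ by the uniqueness half of Corollary~\ref{c31}. As $\omega_1$ is the first vector of a reduced basis, it has the minimal nonzero modulus in $\Gamma$, completing the proof. No genuine difficulty arises beyond this identification: everything else is a direct calculation with~\eqref{eq_delta} and an appeal to the previously established reduced-basis theory.
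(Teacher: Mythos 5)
Your proof is correct and follows essentially the same route as the paper: part~\ref{p33i} by comparing the three sublattice discriminants via~\eqref{eq_delta}, and part~\ref{p33ii} by combining~\ref{p33i} with Theorem~\ref{tRoots} and Corollary~\ref{ctRoots}. The paper's own proof is a one-line citation of exactly these ingredients, and your write-up supplies the omitted details (including the identification $\omega_1 \equiv \omega' \pmod{2\Gamma}$ from $\wp(\omega_1/2)=\wp(\omega'/2)$) correctly.
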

\begin{proof}
    The statement~\ref{p33i} is a simple consequence of~\eqref{eq_DeltaDef},~\eqref{eq_InvFromRoots}, and~\eqref{eq_delta}. Now~\ref{p33ii} is easily derived from~\ref{p33i} and Theorem~\ref{tRoots} alongside with its conversion Corollary~\ref{ctRoots}.


\end{proof}
\begin{corollary}\label{cp33}
    Let $\Gamma'$ be an optimal subgroup of index $2$ in $\Gamma$. Then $\Gamma'$ has only one optimal subgroup of index $2$,
\end{corollary}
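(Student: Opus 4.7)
The strategy is to apply Proposition~\ref{p33}~\ref{p33ii} twice: first to fix a shortest vector of $\Gamma$ that lies in $\Gamma'$, then to characterize the optimal subgroups of index $2$ in $\Gamma'$ as those that contain a shortest vector of $\Gamma'$. Uniqueness will reduce to showing that $\Gamma'$ has only one shortest vector up to sign, after which Corollary~\ref{c31} supplies the conclusion.

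Concretely, I would pick $\omega_1 \in \Gamma'$ with $|\omega_1| = \inf\{|w| : w \in \Gamma \setminus \{0\}\}$, whose existence is guaranteed by Proposition~\ref{p33}~\ref{p33ii} since $\Gamma'$ is optimal in $\Gamma$. Because $\Gamma' \subset \Gamma$, this $\omega_1$ is also a shortest nonzero vector of $\Gamma'$, so Proposition~\ref{p33}~\ref{p33ii} applied to $\Gamma'$ implies that any optimal subgroup of index $2$ in $\Gamma'$ must contain some $\omega' \in \Gamma'$ with $|\omega'| = |\omega_1|$.

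The heart of the argument is to verify that $\omega' = \pm \omega_1$. I would complete $\omega_1$ to a reduced basis $\omega_1, \omega_2$ of $\Gamma$ and invoke Proposition~\ref{p31}~\ref{p31iii} to write $\Gamma' = \omega_1 \ZZ + 2\omega_2 \ZZ$. After scaling by $\omega_1^{-1}$ and replacing $\omega_2$ by $-\omega_2$ if necessary, we may assume $\omega_1 = 1$ and $\omega_2 = \tau \in \overline{\Phi}$ by Lemma~\ref{l31}. The constraints $|\mathrm{Re}\,\tau| \leq 1/2$ and $|\tau| \geq 1$ yield $\mathrm{Im}\,\tau \geq \sqrt{3}/2$, so an element $m + 2n\tau \in \Gamma'$ with $|m + 2n\tau| \leq 1$ must satisfy $2|n|\,\mathrm{Im}\,\tau \leq 1$, forcing $n = 0$ and hence $m = \pm 1$. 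Since in addition $\omega_1 \notin 2\Gamma' = 2\omega_1 \ZZ + 4\omega_2 \ZZ$, Corollary~\ref{c31} applied to $\omega_1 \in \Gamma' \setminus 2\Gamma'$ produces a unique subgroup of index $2$ in $\Gamma'$ containing $\omega_1$, which is therefore the only optimal one.

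The only subtlety is the shortest-vector uniqueness in $\Gamma'$: $\Gamma$ itself may admit several shortest vectors when the reduced-basis inequalities are non-strict (cases handled in Theorem~\ref{tRoots}~\ref{tRootsii} and~\ref{tRootsiii}), but doubling the second period in passing from $\Gamma$ to $\Gamma'$ pushes every competing lattice point strictly outside the minimal-length sphere, and this is precisely what the bound $\mathrm{Im}\,\tau \geq \sqrt{3}/2$ captures.
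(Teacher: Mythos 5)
Your proof is correct. Both you and the paper reduce the claim to the same two ingredients: Proposition~\ref{p33}~\ref{p33ii} applied inside $\Gamma'$ (an optimal index-$2$ subgroup of $\Gamma'$ must contain a nonzero vector of minimal length in $\Gamma'$, and that minimal length is $|\omega_1|$ because $\omega_1\in\Gamma'\subset\Gamma$), and Corollary~\ref{c31} (a vector of $\Gamma'\setminus 2\Gamma'$ lies in exactly one index-$2$ subgroup of $\Gamma'$). The only real content is then the essential uniqueness of the shortest vector of $\Gamma'$, and here your verification differs from the paper's. The paper argues by contradiction: two distinct optimal subgroups of $\Gamma'$ would produce a second minimal vector $\omega_2\in\Gamma'$ with $\omega_2\notin\omega_1\ZZ$; then $(\omega_1,\omega_2)$ is a reduced basis of $\Gamma$, hence a basis, so $\Gamma'\supseteq\Span\{\omega_1,\omega_2\}=\Gamma$, contradicting $[\Gamma:\Gamma']=2$. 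You instead write $\Gamma'=\omega_1\ZZ+2\omega_2\ZZ$ via Proposition~\ref{p31}~\ref{p31iii}, normalize to $\ZZ+2\tau\ZZ$ with $\tau\in\overline{\Phi}$ by Lemma~\ref{l31}, and use $\operatorname{Im}\tau\ge\sqrt{3}/2$ to check directly that $\pm\omega_1$ are the only vectors of $\Gamma'$ of length at most $|\omega_1|$. Your route is a bit longer but quantitative (it shows every other nonzero vector of $\Gamma'$ has length at least $\sqrt{3}\,|\omega_1|$, which explains why doubling one period destroys any tie among minimal vectors of $\Gamma$), whereas the paper's is shorter because it recycles the reduced-basis machinery already established. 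Both arguments are complete.
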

\begin{proof}
    By Proposition~\ref{p33}~\ref{p33ii} there is $\omega_1 \in \Gamma'$ such that $|\omega_1| = \inf \{|w|: w \in \Gamma \setminus \{0\}\}$. If $\Gamma'$ contains two distinct optimal subgroups of index $2$, then there exists $\omega_2 \in \Gamma'$ such that $|\omega_2| = |\omega_1|$ and $\omega_2 \notin \omega_1 \ZZ$. It is clear that $(\omega_1, \omega_2)$ is a reduced basis in $\Gamma$ (hence, a basis), so $\Gamma' = \Gamma$. We arrived at a contradiction.
\end{proof}

To conclude this section we consider a sequence of subgroups of $\Gamma$ where each one of them has index $2$ and is optimal in the previous one. We analyse the convergence of the Weierstrass invariants corresponding to these lattices.

\begin{definition}
    We say that a sequence $\{a_n \in \CC\}_{n \in \NN}$ converges quadratically fast if it converges to some $a \in \CC$ and there exist $C, q > 0$ such that $|a_n - a| \le C\exp(-q2^n)$ for all $n \in \NN$.
\end{definition}

It is easy to verify that a sequence $\{a_n\}_{n \in \NN}$ that converges to $0$ and satisfies $|a_{n+1}| \le C |a_n|^2$ for large $n$ and some constant $C \ge 0$ converges to $0$ quadratically fast. The mentioned condition is the main source of sequences that converge quadratically fast. However, this condition does not behave well under certain operations (e.g., the sum of two sequences that satisfy this property may no longer be of the same type), and that is the reason why we prefer the definition above. Finally, we note that a sequence $\{a_n\}_{n \in \NN}$ converges quadratically fast if and only if the sequence $\{a_{n+1} - a_n\}_{n \in \NN}$ converges to $0$ quadratically fast.

\begin{lemma}\label{l35}
    Let $\Gamma_0 \supset \Gamma_1 \supset \Gamma_2 \supset \dots$ be a sequence of discrete subgroups of $\CC$. Let $G = \bigcap_{n = 0}^\infty \Gamma_n$. Then the following statements hold.
    \begin{enumerate}[label=(\roman*)]
        \item\label{l35i} $G$ is a lattice only if the sequence $\Gamma_n$ stabilizes.
        \item\label{l35ii} $g_2(\Gamma_n) \rightarrow g_2(G)$ and $g_3(\Gamma_n) \rightarrow g_3(G)$.
    \end{enumerate}
\end{lemma}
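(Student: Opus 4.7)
For part~\ref{l35i}, my plan is to argue as follows. Since $G \subset \Gamma_n \subset \Gamma_0$ and $G$ is a lattice, each $\Gamma_n$ has rank $2$, is therefore itself a lattice, and the indices $[\Gamma_n : G]$ are finite. From $\Gamma_{n+1} \subset \Gamma_n$ the group $\Gamma_{n+1}/G$ embeds into $\Gamma_n/G$, so $[\Gamma_n:G]$ is a nonincreasing sequence of positive integers and must eventually be constant, say from some index $N$ onwards. At that stage $\Gamma_{n+1}/G$ and $\Gamma_n/G$ are finite groups of the same order with one contained in the other, forcing $\Gamma_{n+1} = \Gamma_n$ for all $n \geq N$.

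For part~\ref{l35ii}, the first reduction is that if $G$ is itself a lattice then the claim follows at once from~\ref{l35i}, so I may assume $G$ has rank at most $1$. The key observation in this case is the \emph{stabilization of short vectors}: for every $R > 0$ the sets $\Gamma_n \cap \{u : |u| \leq R\}$ form a decreasing chain of finite subsets of $\CC$, so they must stabilize, and by the definition of $G$ as the total intersection the eventual value is $G \cap \{u : |u| \leq R\}$.

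To upgrade this to convergence of the invariants, I would combine it with a uniform tail estimate. For any discrete subgroup of $\CC$ the series $\sum_{u \in \Gamma_0 \setminus \{0\}} 1/|u|^4$ converges absolutely (the exponent $4$ is safely above the rank-$2$ threshold of $2$), and since $\Gamma_n, G \subset \Gamma_0$, the tails of the series defining $g_2(\Gamma_n)$ and $g_2(G)$ over $\{|u| > R\}$ are each dominated by the corresponding $\Gamma_0$-tail. Given $\epsilon > 0$, pick $R$ large enough that this $\Gamma_0$-tail has absolute value less than $\epsilon/2$; then for all $n$ large enough that the short-vector parts of $\Gamma_n$ and $G$ coincide, one obtains $|g_2(\Gamma_n) - g_2(G)| < \epsilon$, and the identical argument with $1/|u|^6$ handles $g_3$. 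The only subtle point is passing from the pointwise stabilization of lattice points to the analytic convergence of the series, and the uniform domination by the fixed ambient $\Gamma_0$ is precisely the device that makes this work.
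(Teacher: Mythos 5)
Your proof is correct and follows essentially the same route as the paper: part (i) is the same finite-quotient stabilization argument, and part (ii) is what the paper dismisses as "trivially following" from the series definitions~\eqref{gw} — your stabilization-of-short-vectors plus uniform tail domination by $\sum_{u\in\Gamma_0\setminus\{0\}}|u|^{-4}$ is exactly the standard justification of that step, written out in full.
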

\begin{proof}
Assume that $G$ is a lattice. In this case the quotient group $\Gamma_0 / G$ is finite. Now it is clear that the sequence $\Gamma_n / G$ stabilises since it is a monotone sequence of subsets of a finite set. It easily follows that $\Gamma_n$ itself stabilises. The statement~\ref{l35i} is proved.

The statement~\ref{l35ii} trivially follows from equalities~\eqref{gw}.
\end{proof}

 \begin{lemma}\label{l36}
    Consider a sequence $\Gamma_0 \supset \Gamma_1 \supset \Gamma_2 \supset \dots$, where $\Gamma_i$ is optimal and has index $2$ in $\Gamma_{i-1}$. Let for each $n = 0,1,2\dots$ the triple $(e_1^{(n)}, e_2^{(n)}, e_3^{(n)})$ consists of the distinct roots of $4x^3 - g_2(\Gamma_n) x - g_3(\Gamma_n)$ such that  $e_1^{(n)} = \wp(\omega_n/2; \Gamma_n)$, where $\omega_n \in \Gamma_{n+1} \setminus 2\Gamma_n$. Then the following statement hold.
    \begin{enumerate}[label=(\roman*)]
        \item\label{l36i} $\bigcap_{n \in \NN} \Gamma_n$ is a discrete subgroup in $\CC$ of rank $1$, and its generating element $\phi$ satisfies $|\phi| = \inf \{ |w|: w \in \Gamma_0 \setminus \{0\}\}$.
        \item\label{l36ii} The sequence $\{\Delta(\Gamma_n)\}_{n \in \NN}$ converges to $0$ quadratically fast.
        \item\label{l36iii} The sequence $\{e_2^{(n)} - e_3^{(n)}\}_{n \in \NN}$ converges to $0$ quadratically fast.
    \end{enumerate}
 \end{lemma}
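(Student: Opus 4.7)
The plan is to prove (i) first, since it identifies the limit group $G$ and its generator; then (ii) follows from a recursion obtained by combining~\eqref{eq_delta} with~\eqref{eq_InvFromRoots}, and (iii) is immediate from (ii).

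\textbf{Part (i).} Let $\omega_0 \in \Gamma_0$ satisfy $|\omega_0| = \inf\{|w|: w \in \Gamma_0 \setminus \{0\}\}$. Since $\Gamma_1$ is optimal in $\Gamma_0$, Proposition~\ref{p33}~\ref{p33ii} lets us choose $\omega_0 \in \Gamma_1$. I claim $\omega_0 \in \Gamma_n$ for every $n$, by induction: assume $\omega_0 \in \Gamma_n$ with $n \ge 1$. Since $\Gamma_n \subset \Gamma_0$, $\omega_0$ remains a shortest nonzero vector of $\Gamma_n$, so $\Span(2\Gamma_n \cup \{\omega_0\})$ is an optimal subgroup of index $2$ in $\Gamma_n$ by Proposition~\ref{p33}~\ref{p33ii}. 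But $\Gamma_n$ is itself optimal in $\Gamma_{n-1}$, so Corollary~\ref{cp33} ensures that $\Gamma_n$ has only one optimal subgroup of index $2$; hence this subgroup coincides with $\Gamma_{n+1}$, and $\omega_0 \in \Gamma_{n+1}$. Thus $\omega_0 \ZZ \subset G := \bigcap_n \Gamma_n$. Since $\Gamma_n$ does not stabilize, Lemma~\ref{l35}~\ref{l35i} rules out that $G$ is a lattice, and since $\omega_0 \ne 0$ we obtain $\rank(G) = 1$. The generator $\phi$ of $G$ satisfies $|\phi| \le |\omega_0|$ (as $\omega_0 \in \phi\ZZ$) and $|\phi| \ge |\omega_0|$ (as $\phi \in \Gamma_0$), so $|\phi|$ equals the claimed infimum.

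\textbf{Parts (ii) and (iii).} Write $a_n = e_1^{(n)} - e_2^{(n)}$, $b_n = e_1^{(n)} - e_3^{(n)}$, $c_n = e_2^{(n)} - e_3^{(n)}$. Formulas~\eqref{eq_InvFromRoots} and~\eqref{eq_delta} combine to
\[
|\Delta(\Gamma_{n+1})| = \frac{|\Delta(\Gamma_n)|^2}{4096 \, |a_n b_n|^3}, \qquad |c_n|^2 = \frac{|\Delta(\Gamma_n)|}{16 |a_n b_n|^2}.
\]
A Vieta computation gives $a_n b_n = 3(e_1^{(n)})^2 - g_2(\Gamma_n)/4$. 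Part (i) and Lemma~\ref{l35}~\ref{l35ii} yield $g_2(\Gamma_n) \to 4\pi^4/(3\phi^4)$ and $g_3(\Gamma_n) \to 8\pi^6/(27\phi^6)$, so the cubic $4x^3 - g_2(\Gamma_n) x - g_3(\Gamma_n)$ converges coefficient-wise to $4(x - E)(x + E/2)^2$ with $E = 2\pi^2/(3\phi^2)$. By continuity of polynomial roots, $\{e_1^{(n)}, e_2^{(n)}, e_3^{(n)}\}$ approaches $\{E, -E/2, -E/2\}$ as a multiset; the proper ordering $|c_n| \le \min(|a_n|, |b_n|)$ forces $e_1^{(n)} \to E$ (otherwise one of $|a_n|, |b_n|$ tends to $0$ while $|c_n|$ tends to $3|E|/2$, contradicting the ordering). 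Therefore $a_n b_n \to 9E^2/4 \ne 0$, and $|a_n b_n|$ is bounded above and below by positive constants for large $n$. Combined with $\Delta(G) = 0$ (a direct check from the rank-$1$ formulas) and Lemma~\ref{l35}~\ref{l35ii}, the first identity gives $|\Delta(\Gamma_{n+1})| \le K |\Delta(\Gamma_n)|^2$, and since $\Delta(\Gamma_n) \to 0$ we obtain quadratic convergence of $\Delta(\Gamma_n)$. The second identity shows $|c_n|^2$ is comparable to $|\Delta(\Gamma_n)|$, so a bound $|\Delta(\Gamma_n)| \le C \exp(-q 2^n)$ transfers to $|c_n| \le C' \exp(-(q/2) 2^n)$, which is quadratic convergence of $c_n$ to $0$.

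\textbf{Main obstacle.} The trickiest step is part (i): propagating the shortest vector $\omega_0$ through the entire nested chain crucially uses the uniqueness of the optimal subgroup of index $2$ provided by Corollary~\ref{cp33}. A secondary delicacy in part (ii) is the identification $e_1^{(n)} \to E$ among the three limit points $\{E, -E/2, -E/2\}$; here the proper ordering of roots built into the hypotheses via Theorem~\ref{tRoots} and Corollary~\ref{ctRoots} is essential to single out which limit point $e_1^{(n)}$ approaches, which in turn is what keeps $|a_n b_n|$ away from zero.
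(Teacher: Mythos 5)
Your proof is correct and follows essentially the same route as the paper: part~(i) by propagating the shortest vector through the chain via Proposition~\ref{p33}~\ref{p33ii} and the uniqueness from Corollary~\ref{cp33}, and parts~(ii)--(iii) from the same two identities derived from~\eqref{eq_delta} and~\eqref{eq_InvFromRoots}, with the denominator $(e_1^{(n)}-e_2^{(n)})(e_1^{(n)}-e_3^{(n)})$ bounded away from zero. The only (harmless) variation is that you identify the limit $e_1^{(n)}\to E$ via root continuity together with the proper-ordering constraint, whereas the paper gets it directly from $e_1^{(n)}=\wp(\phi/2;\Gamma_n)\to\wp(\phi/2;G)$ using the series definition of $\wp$.
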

 \begin{proof}
Lemma~\ref{l35} implies that $G = \bigcap_{n \in \NN} \Gamma_n$ is not a lattice, so $\dim G \le 1$. Now let $\phi \in \Gamma_1$ satisfy $|\phi| = \inf\{|w|: w \in \Gamma_1 \setminus \{0\}\}$. Corollary~\ref{cp33} implies that $\phi$ is unique in $\Gamma_1$ up to the choice of sign. Now it is clear that $\phi \in \Gamma_n$ for all $n \in \NN$, so $\phi \in G$. So we get $\dim G = 1$ and $G = \phi \ZZ$. It remains to note that $\phi$ also satisfies $|\phi| = \inf \{ |w|: w \in \Gamma_0 \setminus \{0\}\}$ by Proposition~\ref{p33}~\ref{p33ii}. The statement~\ref{l36i} is proved.

Before proving~\ref{l36ii} and~\ref{l36iii} note that the definition of $\wp$ implies that sequence $e_1^{(n)} = \wp(\phi/2; \Gamma_n)$ converges to a nonzero number $e = 2\pi^2/3\phi^2$. Now, Lemma~\ref{l35}~\ref{l35ii} implies that $g_2(\Gamma_n) \rightarrow g_2(G)$ and $g_3(\Gamma_n) \rightarrow g_3(G)$. Since the polynomial $4x^3 - g_2(G) x - g_3(G)$ has multiple roots, $|e_2^{(n)} - e_3^{(n)}| \le \min\{|e_1^{(n)} - e_2^{(n)}|, |e_1^{(n)} - e_3^{(n)}|\}$, and roots are continuous with respect to the coefficients, we obtain that $|e_2^{(n)} - e_3^{(n)}| \rightarrow 0$. Thus, sequences $e_2^{(n)}$ and $e_3^{(n)}$ converge to $-e/2$.

Formula~\eqref{eq_delta} implies that
$$
    \Delta(\Gamma_{n+1}) = \frac{\Delta(\Gamma_n)^2}{4096 (e_1^{(n)} - e_2^{(n)})^3 (e_1^{(n)} - e_3^{(n)})^3}.
$$
Since the denominator in this equality has a nonzero limit we easily obtain that $\{\Delta(\Gamma_n)\}_{n \in \mathbb N}$ converges to $0$ quadratically fast.

Finally, statement~\ref{l36iii} follows from~\ref{l36ii} and the equality
$$
    (e_2^{(n)} - e_3^{(n)})^2 = \frac{\Delta(\Gamma_n)}{16(e_1^{(n)} - e_2^{(n)})^2 (e_1^{(n)} - e_3^{(n)})^2}.
$$

\end{proof}
\begin{theorem}\label{t32}
    Let $\Gamma_0 \supset \Gamma_1 \supset \Gamma_2 \supset \dots$ and $(e_1^{(n)}, e_2^{(n)}, e_3^{(n)}), n \in \NN$ satisfy the condition of Lemma~\ref{l33}. Then the sequences $\{g_2(\Gamma_n)\}_{n \in \NN}$ and $\{g_3(\Gamma_n)\}_{n \in \NN}$ converge quadratically fast.
\end{theorem}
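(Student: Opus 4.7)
My plan is to use the characterization noted just before Lemma~\ref{l35}: a sequence $\{a_n\}$ converges quadratically fast if and only if $\{a_{n+1}-a_n\}$ converges to $0$ quadratically fast. So I would try to bound the increments $g_2(\Gamma_{n+1})-g_2(\Gamma_n)$ and $g_3(\Gamma_{n+1})-g_3(\Gamma_n)$ by a bounded multiple of $(e_2^{(n)}-e_3^{(n)})^2$, at which point the conclusion follows immediately from Lemma~\ref{l36}\,\ref{l36iii}, since the square of a sequence that converges to $0$ quadratically fast again converges to $0$ quadratically fast (in the notation of the definition, replace $C$ by $C^2$ and $q$ by $2q$).

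The key ingredient is the Landen formula from Proposition~\ref{p32}\,\ref{p32iii}, which expresses the invariants of $\Gamma_{n+1}$ directly in terms of the roots $e_1^{(n)},e_2^{(n)},e_3^{(n)}$ of $\Gamma_n$:
\begin{equation*}
    g_2(\Gamma_{n+1})=\frac{3(e_1^{(n)})^2}{4}+(e_1^{(n)}-e_2^{(n)})(e_1^{(n)}-e_3^{(n)}),\qquad
    g_3(\Gamma_{n+1})=-\frac{(e_1^{(n)})^3}{8}+\frac{e_1^{(n)}}{2}(e_1^{(n)}-e_2^{(n)})(e_1^{(n)}-e_3^{(n)}).
\end{equation*}
On the other hand $g_2(\Gamma_n)$ and $g_3(\Gamma_n)$ are given by~\eqref{eq_InvFromRoots}. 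Subtracting and using the standard symmetric relation $e_1^{(n)}+e_2^{(n)}+e_3^{(n)}=0$ (which lets me write everything in terms of $s=-e_1^{(n)}=e_2^{(n)}+e_3^{(n)}$ and $t=e_2^{(n)}-e_3^{(n)}$), I expect the $s$-terms to cancel completely and leave a clean expression in $t^2$. A direct calculation then yields
\begin{equation*}
    g_2(\Gamma_{n+1})-g_2(\Gamma_n)=-\tfrac{5}{4}\bigl(e_2^{(n)}-e_3^{(n)}\bigr)^2,\qquad
    g_3(\Gamma_{n+1})-g_3(\Gamma_n)=\tfrac{7}{8}\,e_1^{(n)}\bigl(e_2^{(n)}-e_3^{(n)}\bigr)^2.
\end{equation*}

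To close the argument I only need $e_1^{(n)}$ to stay bounded, which is clear from Lemma~\ref{l36}: the proof of part~\ref{l36ii} observes that $e_1^{(n)}=\wp(\phi/2;\Gamma_n)$ converges to the nonzero number $2\pi^2/(3\phi^2)$, so the whole sequence is bounded. Combined with Lemma~\ref{l36}\,\ref{l36iii} this gives quadratic-fast decay of both increments, and the characterization recalled above completes the proof.

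I do not expect any real obstacle here, since once one writes out Proposition~\ref{p32}\,\ref{p32iii} and~\eqref{eq_InvFromRoots} the whole theorem reduces to an elementary symmetric-function computation; the only mild point is to recognise the miraculous cancellation of the $s$-terms, which is forced by the fact that if $t=0$ then $\Gamma_n$ degenerates and both $g_2,g_3$ are already at their limiting values, so the difference must vanish identically in $s$.
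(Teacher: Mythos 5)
Your proposal is correct and follows essentially the same route as the paper: compute the increments $g_2(\Gamma_{n+1})-g_2(\Gamma_n)$ and $g_3(\Gamma_{n+1})-g_3(\Gamma_n)$ from Proposition~\ref{p32}\,\ref{p32iii} and~\eqref{eq_InvFromRoots}, obtain $-\tfrac{5}{4}(e_2^{(n)}-e_3^{(n)})^2$ and $\tfrac{7}{8}e_1^{(n)}(e_2^{(n)}-e_3^{(n)})^2$, and conclude via Lemma~\ref{l36}\,\ref{l36iii} together with the increment characterization of quadratically fast convergence. Your explicit remark that $e_1^{(n)}$ stays bounded is a small point the paper leaves implicit, but otherwise the arguments coincide.
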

\begin{proof}
    For convenience we fix $n$ and let $e_j = e^{(n)}_j$, $j = 1,2,3$. Formulae~\eqref{eq_g2},~\eqref{eq_g3}, and~\eqref{eq_InvFromRoots} imply that
    $$g_2(\Gamma_{n+1}) - g_2(\Gamma_n) = \frac{e_1^2}{4} + (e_1 - e_2)(e_1 - e_3) - 2(e_1^2+e_2^2 + e_3^2)= -\frac{5}{4}(e_2 - e_3)^2,$$
    $$g_3(\Gamma_{n+1}) - g_3(\Gamma_n) = -\frac{e_1^3}{8} + \frac{e_1}{2}(e_1 - e_2)(e_1 - e_3) - 4e_1 e_2 e_3 = \frac{7e_1}{8}(e_2 - e_3)^2.$$

    Now the statement follows from Lemma~\ref{l36}~\ref{l36iii}.
\end{proof}
    \section{Computation of periods and the Abel map}
    \label{sec:periods}
    As usual, we denote by $\CC^{(n)}$ the set of all unordered $n$-tuples of complex numbers. We define the mapping $\mathscr L : \CC \times \CC^{(2)} \rightarrow \CC \times \CC^{(2)}$ by the following condition:
    $(f_1,\{f_2,f_3\}) = \mathscr L(e_1,\{e_2,e_3\})$ if
    $$
        f_1 = -\frac{e_1}{2},\;\; f_1 + f_2 + f_3 = 0,\;\;16(f_2 - f_1)(f_3 - f_1) = (e_2 - e_3)^2.
    $$
    As was noted after Proposition~\ref{p31}, $\mathscr L$ is correctly defined. Also, Proposition~\ref{p31}~\ref{p31ii} implies that if $e_1,e_2,e_3$ are distinct roots of the polynomial $4x^3 - g_2(\Gamma)x - g_3(\Gamma)$ for a lattice $\Gamma$, then $\mathscr L(e_1, \{e_2, e_3\})$ consists of the roots corresponding to a subgroup in $\Gamma$ of index $2$. Different choices of a separate root correspond to different subgroups in $\Gamma$. We will also use the mapping $\mathscr F:\CC \times \CC^{(2)} \rightarrow \CC^{(3)}$ that forgets about the selected element, i.e. $\mathscr F(e_1, \{e_2, e_3\}) = \{e_1, e_2, e_3\}$.

    We denote by $\mathfrak G \subset \CC^{(3)}$ the set of all unordered triples of complex numbers $\{e_1, e_2, e_3\}$ such that $e_1 + e_2 + e_3 = 0$ and there is a shuffle $\{j, k, l\} = \{1, 2, 3\}$ such that $|e_j - e_k| > |e_k - e_l|$ and $|e_j - e_l| > |e_k - e_l|$. That is, Proposition~\ref{p33} implies that triples from $\mathfrak G$ correspond to lattices that have exactly one optimal subgroup of index $2$. We define $\mathscr S:\mathfrak G \rightarrow \CC \times \CC^{(2)}$ to be the selection of $e_j$, i.e. $\mathscr S(\{e_1, e_2,e_3\}) = (e_j, \{e_k, e_l\})$. On the set $\mathfrak G$ we define the mapping $\widetilde{\mathscr {L}}: \mathfrak G \rightarrow \CC \times \CC^{(2)}$ that is given as $\widetilde{\mathscr{L}}(e) = \mathscr L(\mathscr S(e))$. Corollary~\ref{cp33} implies that $\mathscr F \circ \widetilde{\mathscr{L}}$ maps $\mathfrak G$ into itself.

    \begin{remark}
        The operations $\mathscr F$ and $\mathscr S$ are necessary for the convenient expression of the computational methods below, since the choice of the optimal subgroup of index $2$ in a lattice on each step always requires a reordering of the roots. Indeed, without the reordering the iterations of the Landen transformation $\mathscr L$ will always be not an optimal choice. More precisely, it can be verified that $\mathscr L(\mathscr L(e))$ is equal to $e/4$ up to a reordering.
    \end{remark}

    \begin{lemma}\label{l41}
        Let $\Gamma$ be a lattice and $(e_1,e_2,e_3)$ be the properly ordered triple of roots of the polynomial $4x^3 - g_2(\Gamma)x - g_3(\Gamma)$. Also let $\omega_1,\omega_2$ be a reduced basis in $\Gamma$ that satisfies the conditions of Corollary~\ref{ctRoots}. Finally, let $(f_1,\{f_2, f_3\}) = \mathscr L(e_2, \{e_1, e_3\})$.
        \begin{enumerate}[label=(\roman*)]
            \item\label{l41i}
                The subgroup $\hat{\Gamma} = \Span\{\omega_2, 2\omega_1\}$ of index $2$ in $\Gamma$ has $\{f_1, f_2, f_3\}$ as the roots of the corresponding polynomial. Also $f_1 = \wp(\omega_1, \hat{\Gamma})$.
            \item\label{l41ii}
                Either $\omega_2$, or $2\omega_1$ is an element in $\hat{\Gamma} \setminus\{0\}$ with minimal absolute value.
                Moreover, if $2\omega_1$ has minimal absolute value, then the basis $2\omega_1, \omega_2$ is reduced. Finally, if $\omega_2$ has minimal absolute value, then one of the bases $(\omega_2, 2\omega_1)$, $(\omega_2, 2\omega_1 + \omega_2)$, or $(\omega_2, 2\omega_1 - \omega_2)$ is reduced.
            \item\label{l41iii}
                $2|\omega_1| \le |\omega_2|$ if and only if $|f_1 - f_2| \ge |f_2 - f_3|$ and $|f_1 - f_3| \ge |f_2 - f_3|$.
        \end{enumerate}
    \end{lemma}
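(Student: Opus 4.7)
The plan is to derive the three parts in order, using Corollary~\ref{c31}, Proposition~\ref{p32}, and Corollary~\ref{ctRoots} as the main tools.

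For~\ref{l41i}, I would first observe that $\omega_2 \in \Gamma \setminus 2\Gamma$ since $(\omega_1,\omega_2)$ is a basis, and that $2\Gamma = \Span\{2\omega_1, 2\omega_2\}$, so $\hat{\Gamma} = \Span\{\omega_2, 2\omega_1\} = \Span(2\Gamma \cup \{\omega_2\})$. By Corollary~\ref{c31} this subgroup has index $2$ in $\Gamma$. I would then invoke Proposition~\ref{p32}, taking its distinguished element of $\hat{\Gamma}\setminus 2\Gamma$ to be $\omega_2$ and its element of $\Gamma\setminus\hat{\Gamma}$ to be $\omega_1$. In this identification the proposition's $e_1$ is our $e_2$, and its $\hat e_1$ equals both $\wp(\omega_1,\hat{\Gamma})$ and $-e_2/2 = f_1$. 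The remaining identities in Proposition~\ref{p32}~\ref{p32ii}, together with $\hat e_1 + \hat e_2 + \hat e_3 = 0$, reproduce the defining conditions of $\mathscr L(e_2, \{e_1, e_3\})$, forcing $\{f_2, f_3\} = \{\hat e_2, \hat e_3\}$.

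For~\ref{l41ii}, I would rescale so $\omega_1 = 1$, making the reduced basis condition equivalent to $|\omega_2| \ge 1$ and $|\mathrm{Re}\,\omega_2| \le 1/2$, hence $(\mathrm{Im}\,\omega_2)^2 \ge 3/4$. A nonzero element of $\hat{\Gamma}$ has the form $w = m\omega_2 + 2n$, and expanding
\[
|w|^2 = m^2|\omega_2|^2 + 4mn\,\mathrm{Re}(\omega_2) + 4n^2
\]
and completing the square separately in $n$ and in $m$ gives the lower bounds $|w|^2 \ge m^2(\mathrm{Im}\,\omega_2)^2$ and $|w|^2 \ge 4n^2(\mathrm{Im}\,\omega_2)^2/|\omega_2|^2$, both strictly exceeding $\min(|\omega_2|^2, 4)$ whenever $|m| \ge 2$ or $|n| \ge 2$. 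Hence the minimum is attained for $|m|,|n| \le 1$ and equals $\min(|\omega_2|, 2|\omega_1|)$. Applying the same estimates restricted to elements not proportional to the shortest one yields: in Case~1 ($|2\omega_1| \le |\omega_2|$) the shortest such element is $\pm\omega_2$, so $(2\omega_1,\omega_2)$ is reduced; in Case~2, since $|\mathrm{Re}\,\omega_2|/|\omega_2|^2 \le 1$ confines the integer minimizer in $m$ at $n = \pm 1$ to $\{-1,0,1\}$, the shortest element not proportional to $\omega_2$ is one of $2\omega_1, 2\omega_1 \pm \omega_2$, giving the three listed reduced bases. This bookkeeping is the main obstacle of the proof; the underlying inequalities are elementary, but the case split must be carried out with care.

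For~\ref{l41iii} I would combine~\ref{l41i},~\ref{l41ii}, and Corollary~\ref{ctRoots}. If $2|\omega_1| \le |\omega_2|$, then $(2\omega_1,\omega_2)$ is a reduced basis of $\hat{\Gamma}$ by~\ref{l41ii}, so Corollary~\ref{ctRoots} identifies $f_1 = \wp(\omega_1,\hat{\Gamma})$ as the first entry of a properly ordered triple of roots of $\hat{\Gamma}$; together with $\{f_2,f_3\} = \{\hat e_2,\hat e_3\}$ from~\ref{l41i}, the proper-order inequalities give the stated relations. Conversely, $|f_1 - f_j| \ge |f_2 - f_3|$ for $j = 2,3$ means $f_1$ plays the role of the first entry in a properly ordered triple for $\hat{\Gamma}$, so by Corollary~\ref{ctRoots} $f_1 = \wp(v/2,\hat{\Gamma})$ for the first element $v$ of some reduced basis of $\hat{\Gamma}$. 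Since $\wp(\omega_1,\hat{\Gamma}) = \wp(v/2,\hat{\Gamma})$ forces $\omega_1 \equiv \pm v/2 \pmod{\hat{\Gamma}}$, and since the alternatives $v \equiv \pm\omega_2$ or $v \equiv \pm(2\omega_1 \pm \omega_2)$ modulo $\hat{\Gamma}$ allowed by~\ref{l41ii} would require the impossible $\omega_1 \equiv \pm\omega_2/2 \pmod{\hat{\Gamma}}$, we conclude $v \equiv \pm 2\omega_1 \pmod{\hat{\Gamma}}$. Hence $|v| = 2|\omega_1|$ is the minimum of $\hat{\Gamma}\setminus\{0\}$, i.e., $2|\omega_1| \le |\omega_2|$.
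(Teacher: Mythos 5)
Your parts \ref{l41i} and \ref{l41ii} are correct and essentially match the paper's route: \ref{l41i} is the same reduction to Proposition~\ref{p32}~\ref{p32ii} (with the roles of $\omega_1,\omega_2$ swapped), and for \ref{l41ii} you replace the paper's short abstract argument (which uses $\hat{\Gamma}\cap\omega_1\ZZ = 2\omega_1\ZZ$ and $\hat{\Gamma}\setminus 2\omega_1\ZZ\subset\Gamma\setminus\omega_1\ZZ$ to read the minima off the reduced basis of $\Gamma$) by an explicit completion-of-squares estimate on $|m\omega_2+2n|^2$; both work, yours being longer but more self-contained. For the converse of \ref{l41iii} you genuinely diverge: the paper takes the reduced basis of $\hat\Gamma$ supplied by \ref{l41ii}, notes that $2\omega_1$ would be the second basis vector or the sum, and invokes the equality cases \ref{tRootsii}--\ref{tRootsiii} of Theorem~\ref{tRoots} to force $|2\omega_1|=|\omega_2|$; you instead go through Corollary~\ref{ctRoots} and the identification of roots with $2$-torsion classes. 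That route is viable, but it has one unjustified step.

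The gap is the final inference ``we conclude $v\equiv\pm 2\omega_1$, hence $|v|=2|\omega_1|$.'' What you actually establish (via the root/$2$-torsion correspondence; note the congruences should be taken modulo $2\hat{\Gamma}$, not $\hat{\Gamma}$) is that the shortest vector $v$ of $\hat{\Gamma}$ lies in the coset $2\omega_1+2\hat{\Gamma}$. This does not by itself give $|v|=2|\omega_1|$: a priori that coset could contain an element shorter than $2\omega_1$, and statement \ref{l41ii} does not assert that every minimizer of $|\cdot|$ on $\hat{\Gamma}\setminus\{0\}$ is among $\pm 2\omega_1,\pm\omega_2,\pm(2\omega_1\pm\omega_2)$ --- it only asserts that one of $2\omega_1,\omega_2$ attains the minimum. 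The fix is a one-line observation you should add: $2\omega_1+2\hat{\Gamma}=2(\omega_1+\hat{\Gamma})$, and since $\omega_1\notin\hat{\Gamma}$ every element of $\omega_1+\hat{\Gamma}$ is a nonzero element of $\Gamma$ and hence has modulus at least $|\omega_1|$; therefore every element of the coset of $v$ has modulus at least $2|\omega_1|$, so $|v|\ge 2|\omega_1|\ge |v|$ and $2|\omega_1|=|v|\le|\omega_2|$. With that insertion your argument is complete.
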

    \begin{proof}
        The statement~\ref{l41i} is a direct consequence of Proposition~\ref{p32}~\ref{p32ii}. To prove~\ref{l41ii} assume that $2|\omega_1| > \inf\{|w|: w \in \hat{\Gamma} \setminus\{0\}\}$. Then, obviously, the minimal absolute value element in $\hat{\Gamma}$ belongs to $\Gamma \setminus \omega_1\ZZ$. It is easy to conclude, that $|\omega_2| = \inf\{|w|: w \in \hat{\Gamma} \setminus\{0\}\}$. Now consider the case $2|\omega_1| = \inf\{|w|: w \in \hat{\Gamma} \setminus\{0\}\}$. Since $\hat{\Gamma} \setminus 2\omega_1\ZZ \subset \Gamma \setminus \omega_1\ZZ$ we obtain that also $|\omega_2| = \inf\{|w|: w \in \hat{\Gamma} \setminus 2\omega_1\ZZ\}$, i.e. basis $2\omega_1, \omega_2$ is reduced. Finally, assume $|\omega_2| = \inf\{|w|: w \in \hat{\Gamma} \setminus\{0\}\}$. Without loss of generality we put $\omega_1 = 1$. Then $1\le |\omega_2| \le 2$ and $|\mathrm{Re}\:\omega_2| \le 1/2$. It is clear that there exists a reduced basis in $\hat{\Gamma}$ of the form $\omega_2, 2 + k\omega_2$ for some $k \in \ZZ$. Moreover, it is sufficient to satisfy $|\mathrm{Re}\:(2/\omega_2 + k)| \le 1/2$. The conditions on $\omega_2$ easily imply that $|\mathrm{Re}\:(2/\omega_2)| \le 1$, so it is possible to find $k$ from the set $\{-1, 0, 1\}$.

        Now we prove~\ref{l41iii}. Suppose that $2|\omega_1| \le |\omega_2|$. Then the basis $2\omega_1, \omega_2$ is reduced and the statement follows from Theorem~\ref{tRoots}~\ref{tRootsi}. To prove the converse assume that $|f_1 - f_2| \ge |f_2 - f_3|$ and $|f_1 - f_3| \ge |f_2 - f_3|$. If $2|\omega_1| > |\omega_2|$, then by~\ref{l34ii} $2\omega_1$ is either the second element, or the sum of elements in a reduced basis in $\Gamma$. In both cases Theorem~\ref{tRoots} combined with the assumption implies that $2\omega_1$ has the smallest absolute value among the elements of $\hat{\Gamma}\setminus\{0\}$, which contradicts $2|\omega_1| > |\omega_2|$.
    \end{proof}
    Now we can formulate an algorithm that computes a reduced basis in a lattice $\Gamma$ given $g_2(\Gamma)$ and $g_3(\Gamma)$. The idea can be formulated as follows: to compute the smallest period just iterate the transformation $\widetilde{\mathscr {L}}$ to choose an optimal subgroup of index 2 until the corresponding lattice is close enough to a rank-$1$ group. Then we can use the formula that relates remaining period to the roots. In order to find the second period in the basis we iterate the Landen transformation to find not an optimal subgroup of index $2$ (so the smallest period multiplies by $2$ and the second period remains the same) until the smallest period of that lattice appears to be the required second period of $\Gamma$. After that we can just find the smallest period in the obtained lattice.
    \begin{algo}\label{alg41}
    \begin{enumerate}
        \item Calculate a properly ordered triple $(e_1,e_2,e_3)$ of distinct roots of the polynomial $4x^3 - g_2(\Gamma)x - g_3(\Gamma)$.
        \item Calculate $f^{(0)} = \mathscr L(e_1,\{e_2,e_3\})$ and $h^{(0)} = \mathscr L(e_2,\{e_1,e_3\})$.
        \item Calculate $f^{(n)} = \widetilde{\mathscr {L}}(\mathscr F (f^{(n-1)}))$ until the difference between two closest roots in the triple $\mathscr F(f^{(n)})$ is sufficiently small. Let $N$ denote the number of iterations. So $f^{(N)} = (f^{(N)}_1, \{f^{(N)}_2, f^{(N)}_3\})$, where $f^{(N)}_1 \approx -e/2$ and $\{f^{(N)}_2, f^{(N)}_3\} \approx \{e, -e/2\}$. Now $\omega_1 = i\pi /\sqrt{3f^{(N)}_1}$ is an approximation for an element in $\Gamma\setminus\{0\}$ with the smallest absolute value.
        \item Assume that the conditions $|h^{(n-1)}_1 - h^{(n-1)}_2| \ge |h^{(n-1)}_2 - h^{(n-1)}_3|$ and $|h^{(n-1)}_1 - h^{(n-1)}_3| \ge |h^{(n-1)}_2 - h^{(n-1)}_3|$ hold. In this case we let $\tilde{h}^{(n-1)} \in \CC^3$ denote a properly ordered triple of the same numbers as in $h^{(n-1)}$ such that $h^{(n-1)}_1 = \tilde{h}^{(n-1)}_1$ and let $h^{(n)} = \mathscr L(\tilde{h}^{(n-1)}_2, \{\tilde{h}^{(n-1)}_1, \tilde{h}^{(n-1)}_3\})$. If the above conditions are not fulfilled we stop the iterations and let $k^{(0)} = \{h^{(n-1)}_1,h^{(n-1)}_2,h^{(n-1)}_3\}$.
        \item Use the same calculations as in the step $3$ to find an approximation for a nonzero element with the smallest absolute value in the lattice that corresponds to the roots $k^{(0)}$. This is an approximation for the second period of $\Gamma$.
    \end{enumerate}
    \end{algo}
    We also present an algorithm to compute $\mathcal A_{\Gamma}(x,y)$ given $g_2(\Gamma)$, $g_3(\Gamma)$, and a pair $(x,y) \in \CC^2$ such that $y^2 = 4x^3 - g_2(\Gamma)x - g_3(\Gamma)$. More precisely, the algorithm below computes some point $z \in \CC$ such that $\mathcal A_{\Gamma}(x,y) = z \;\mathrm{mod}\; \Gamma$.
    \begin{algo}\label{alg42}
    \begin{enumerate}
        \item Calculate a properly ordered $(e_1,e_2,e_3)$ triple of distinct roots of the polynomial $4x^3 - g_2(\Gamma)x - g_3(\Gamma)$.
        \item Let $e^{(1)} = \mathscr L(e_1, \{e_2, e_3\})$ and calculate $e^{(n)} = \widetilde{\mathscr{L}}(\mathscr F(e^{(n-1)}))$ until $|e^{(n)}_2 - e^{(n)}_3|$ is sufficiently small. Let $N$ denote the number of iterations and $\omega$ denote the approximation for the smallest period of $\Gamma$ (i.e. $\omega = i\pi / \sqrt{3e^{(N)}_1}$).
        \item Denote $x_0 = x$ and $y_0 = y$. Calculate a sequence $x_n$, $y_n$, $n = 1, \dots, N$ that satisfies
        $$
            x_{n-1} = x_n + \frac{(e^{(n)}_2 - e^{(n)}_1)(e^{(n)}_3 - e^{(n)}_1)}{x_n - e^{(n)}_1},
        $$
        $$
            y_{n-1} = y_n\left(1 - \frac{(e^{(n)}_2 - e^{(n)}_1)(e^{(n)}_3 - e^{(n)}_1)}{(x_n - e^{(n)}_1)^2}\right).
        $$
        On each iteration there are two possibilities for choosing $x_n$ (the value $y_n$ is determined by the choice of $x_n$). In order to make these sequences converge we require $x_n$ to be that solution of the first equation above, which is closer to $x_{n-1}$.
        \item As an approximation to $\mathcal A_{\Gamma}(x,y)$ we propose $z \;\mathrm{mod}\; \Gamma$, where
        $$
        z = -\frac{\omega}{\pi}\arctan\left(\frac{6\pi \omega^2 x_N + 2\pi^3}{3\omega^3 y_N}\right).
        $$
    \end{enumerate}
    \end{algo}

    For the analysis (in particular, the analysis of convergence) of the similar algorithms formulated in the setting of the complex AGM (which is an equivalent form of the Landen transformation) we refer to~\cite{Cremona}.
\section{Computation of Weierstrass functions}
    \label{sec:computation}

    We are ready to give an algorithm to compute values of the Weierstrass functions $\wp(z, \Gamma)$, $\wp'(z, \Gamma)$, $\zeta(z, \Gamma)$, $\sigma(z, \Gamma)$ given $z, g_2(\Gamma), g_3(\Gamma)$. We follow the ideas of the classical Landen method, that is, we compute a sequence of optimal subgroups of index $2$ until the corresponding Weierstrass functions are approximated well by the functions corresponding to a rank-$1$ additive subgroup. After that the approximations of the Weierstrass functions corresponding to $\Gamma$ are obtained by repeated application of formulae~\eqref{eq_PP2}-\eqref{eq_sigma_sigma}. The only difficulty in this approach is that we can only compute $\sigma^2(z, \Gamma)$ instead of $\sigma(z,\Gamma)$. This problem can be solved by a following trick: values $\wp(z, \Gamma)$, $\wp'(z, \Gamma)$, $\zeta(z, \Gamma)$, $\sigma(z, \Gamma)$ can be recovered from values $\wp(z/2, \Gamma)$, $\wp'(z/2, \Gamma)$, $\zeta(z/2, \Gamma)$, $\sigma(z/2, \Gamma)^2$ using duplication formulae (see \cite[Eq.~18.4.5-8]{AbramovitzStegun}).
    \begin{algo}\label{alg51}
    \begin{enumerate}
        \item Calculate a properly ordered $(e_1,e_2,e_3)$ triple of distinct roots of the polynomial $4x^3 - g_2(\Gamma)x - g_3(\Gamma)$.
        \item Put $e^{(1)} = \mathscr L(e_1, \{e_2, e_3\})$ and calculate $e^{(n)} = \widetilde{\mathscr {L}}(\mathscr F(e^{(n-1)}))$ until $|e^{(n)}_2 - e^{(n)}_3|$ is sufficiently small. Let $N$ denote the number of iterations and $\omega$ denote the approximation for the smallest period of $\Gamma$ (i.e. $\omega = i\pi /\sqrt{3e^{(N)}_1}$).
        \item Initialize $\tilde{\wp}_N, \tilde{\wp}'_N, \tilde{\zeta}_N, \tilde{\sigma}_N$ as
        $$
            \tilde{\wp}_N = \frac{\pi^2}{\omega^2}\left(\frac{1}{\sin\left(\frac{\pi z}{2\omega}\right)^2} - \frac{1}{3}\right),\;\;
            \tilde{\wp}'_N = -\frac{2\pi^3 \cos\left(\frac{\pi z}{2\omega}\right)}{\omega^3 \sin\left(\frac{\pi z}{2\omega}\right)^3}
        $$
        $$
            \tilde{\zeta}_N = \frac{\pi^2 z}{6 \omega^2} + \frac{\pi}{\omega} \cot\left(\pi \frac{z}{2\omega}\right),\;\;
            \tilde{\sigma}_N = \frac{\omega}{\pi}\exp\left(\frac{\pi^2 z^2}{24 \omega^2}\right)\sin\left(\pi \frac{z}{2\omega}\right).
        $$
        \item Compute $\tilde{\wp}_n, \tilde{\wp}'_n, \tilde{\zeta}_n, \tilde{\sigma}_n$ for $n = N-1, N-2, \dots, 0$ by the rules
        $$
            \tilde{\wp}_{n-1} = \tilde{\wp}_n + \frac{(e^{(n)}_2- e^{(n)}_1)(e^{(n)}_3 - e^{(n)}_1)}{\tilde{\wp}_n - e^{(n)}_1},
        $$
        $$
            \tilde{\wp}'_{n-1} = \tilde{\wp}'_{n}\left( 1 - \frac{(e^{(n)}_2- e^{(n)}_1)(e^{(n)}_3 - e^{(n)}_1)}{(\tilde{\wp}_n - e^{(n)}_1)^2}\right),
        $$
        $$
            \tilde{\zeta}_{n-1} = 2\tilde{\zeta}_{n} + \frac{1}{2} \frac{ \tilde{\wp}'_{n}}{\tilde{\wp}_n - e^{(n)}_1} + e^{(n)}_1 \frac{z}{2},
        $$
        $$
            \tilde{\sigma}_{n-1} = \exp\left(e^{(n)}_1 \frac{z^2}{4}\right) (\tilde{\wp}_n - e^{(n)}_1) \tilde{\sigma}_{n}^2.
        $$
        \item Finally, we propose the following approximations for the Weierstrass functions:
        $$
            \wp(z, \Gamma) \approx -2 \tilde{\wp}_0 + \left(\frac{6\tilde{\wp}^2_0 - g_2(\Gamma)/2}{2\tilde{\wp}'_0}\right)^2, $$ $$
            \wp'(z, \Gamma) \approx -\tilde{\wp}'_0 + \frac{6\tilde{\wp}^2_0 - g_2(\Gamma)/2}{4\tilde{\wp}'_0} \left(12\tilde{\wp}_0 - \left(\frac{6\tilde{\wp}^2_0 - g_2(\Gamma)/2}{\tilde{\wp}'_0}\right)^2 \right), $$ $$
            \zeta(z, \Gamma) = \approx 2\tilde{\zeta}_0 + \frac{6\tilde{\wp}^2_0 - g_2(\Gamma)/2}{2\tilde{\wp}'_0}, $$ $$
            \sigma(z, \Gamma) \approx -\tilde{\wp}'_0 \tilde{\sigma}^2_0
        $$
    \end{enumerate}
    \end{algo}

    The proof of convergence of the foregoing approximations is rather technical and we will only consider the approximation of $\wp$-function. The main tool is the following elementary lemma.
    \begin{lemma}\label{l51}
        Let $x \in \CC$ and let $\{f_n\}_{n \in \NN}$ be a sequence of meromorphic functions on $\CC$. Assume that there exists a neighborhood $U$ of $x$ such that functions $f_n$ do not have poles in $U$ for large $n$ and uniformly converge on $U$ to a holomorphic function as $n \rightarrow \infty$. Also assume that the sequences $\{x_n\}_{n \in \NN}$, $\{y_n\}_{n \in \NN}$ converge to $x$ quadratically fast. Then the sequence $\{f_n(x_n) - f_n(y_n)\}$ consists of finite complex numbers for large $n$ and converges to $0$ quadratically fast.
    \end{lemma}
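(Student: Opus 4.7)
The plan is to reduce the statement to a Lipschitz estimate for the $f_n$ near $x$, then exploit the quadratic convergence of $x_n$ and $y_n$ to estimate $|x_n - y_n|$.

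First I would shrink the neighborhood: choose $r>0$ so small that the closed disk $\overline{D}(x, 2r)$ is contained in $U$, and pass to an index $n_0$ beyond which none of the $f_n$ have poles in $\overline{D}(x, 2r)$. Since $\{f_n\}$ converges uniformly on $\overline{D}(x,2r)$ to a holomorphic function, the family $\{f_n\}_{n \ge n_0}$ is uniformly bounded on $\overline{D}(x,2r)$. By Cauchy's integral formula applied on the circle of radius $2r$ around any point $\zeta \in \overline{D}(x,r)$, the derivatives $f_n'$ are then uniformly bounded on $\overline{D}(x,r)$ by some constant $M$.

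Next, using that $x_n \to x$ and $y_n \to x$ (quadratic convergence certainly implies convergence), I would enlarge $n_0$ so that for $n \ge n_0$ both $x_n$ and $y_n$ lie in $D(x,r)$. Then the straight-line segment from $y_n$ to $x_n$ stays in this convex disk, so
\[
    f_n(x_n) - f_n(y_n) \;=\; \int_{y_n}^{x_n} f_n'(\zeta)\, d\zeta,
\]
and therefore $|f_n(x_n) - f_n(y_n)| \le M\,|x_n - y_n|$. In particular these values are finite complex numbers for $n \ge n_0$.

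It remains to show that $\{x_n - y_n\}$ tends to $0$ quadratically fast. By the definition recalled just before the lemma, there exist constants $C_1, C_2, q_1, q_2 > 0$ such that $|x_n - x| \le C_1 \exp(-q_1 2^n)$ and $|y_n - x| \le C_2 \exp(-q_2 2^n)$ for all $n$. Setting $q = \min(q_1, q_2)$ and $C = C_1 + C_2$, the triangle inequality gives $|x_n - y_n| \le C\exp(-q 2^n)$. Combining with the bound of the previous paragraph yields $|f_n(x_n) - f_n(y_n)| \le MC\exp(-q 2^n)$ for $n \ge n_0$, which is exactly quadratic convergence to $0$ (absorbing the finitely many initial terms into a larger constant if necessary). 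The main work is really the Cauchy estimate for $f_n'$; everything else is bookkeeping.
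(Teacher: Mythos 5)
Your proof is correct and follows essentially the same route as the paper: a uniform bound on $f_n'$ on a compact convex neighborhood of $x$ (which the paper simply asserts and you justify via Cauchy estimates) yields a Lipschitz bound, and the quadratic convergence of $x_n$ and $y_n$ finishes the argument. One trivial slip: the Cauchy circle around a point $\zeta \in \overline{D}(x,r)$ should have radius $r$, not $2r$, so that it stays inside $\overline{D}(x,2r)$ where the uniform bound on $f_n$ is available.
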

    \begin{proof}
        Let $V$ denote a compact convex neighborhood of $x$ such that $\overline{V} \subset U$. Let $N \in \NN$ denote a number such that  $x_n,y_n \in U$ and $f_n$ is holomorphic on $U$ for $n \ge N$. It is clear that the values $f_n(x_n)$, $f_n(y_n)$, and $f_n(x)$ belongs to $\CC$ for $n \ge N$. Moreover, the functions $f'_n$ for $n \ge N$ are uniformly bounded on $V$ by some constant $C$. Thus, $|f_n(x_n) - f_n(x)| \le C|x_n - x|$ for $n \ge N$ and the sequence $\{f_n(x_n) - f_n(x)\}$ converges to $0$ quadratically fast. The statement follows as $f_n(x_n) - f_n(y_n) = (f_n(x_n) - f_n(x)) - (f_n(y_n) - f_n(x))$.
    \end{proof}

    Now we introduce the following notation. Let $e^{(n)}$, $N$, and $\omega$ be the same as in the step $(2)$ of the algorithm. To be more precise we use the notation $\omega_N$ instead of $\omega$, since it, obviously, depends on $N$. We denote by $\Gamma_n$ the lattice that corresponds to the roots $e^{(n)}$ and by $\phi_n(x)$ we denote the function
    $$
        \phi_n(x) = x + \frac{(e^{(n)}_2- e^{(n)}_1)(e^{(n)}_3 - e^{(n)}_1)}{x - e^{(n)}_1}.
    $$
    Finally, let $\Phi_n(x) = (\phi_1 \circ \dots \circ \phi_n)(x)$.

    \begin{proposition}\label{p51}
        Assume that $z \notin \Gamma$ and let $$x_N = \frac{\pi^2}{\omega_N^2}\left(\frac{1}{\sin\left(\frac{\pi z}{\omega_N}\right)^2} - \frac{1}{3}\right).$$ Then $\Phi_N(x_N)$ converges to $\wp(z, \Gamma)$ quadratically fast.
    \end{proposition}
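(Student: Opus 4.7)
The plan is to recast the claim as an error-propagation estimate for the composite rational maps $\Phi_N$ and invoke Lemma~\ref{l51}. Set $y_N := \wp(z;\Gamma_N)$. Iterating the Landen identity~\eqref{eq_PP2} gives $\wp(z;\Gamma_{n-1}) = \phi_n(\wp(z;\Gamma_n))$, and hence $\wp(z;\Gamma) = \Phi_N(y_N)$; so the statement reduces to showing that $\Phi_N(x_N) - \Phi_N(y_N) \to 0$ quadratically fast. Let $G := \bigcap_n \Gamma_n = \phi\ZZ$ by Lemma~\ref{l36}~\ref{l36i}; since $G \subset \Gamma_0 = \Gamma$ we have $z \notin G$, so $y_\infty := \wp(z;G) \in \CC$ is finite and will serve as the common target for the two sequences.

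Next I would verify that $y_N$ and $x_N$ both converge to $y_\infty$ quadratically fast. For $y_N$ this is immediate: the Weierstrass $\sigma$ is entire in $(z,g_2,g_3)$ and does not vanish at $(z;g_2(G),g_3(G))$, so $\wp(z;\cdot)$ is analytic in $(g_2,g_3)$ near $(g_2(G),g_3(G))$, and Theorem~\ref{t32} supplies the quadratic convergence of the invariants. For $x_N$, tracing through the operators $\mathcal S$ and $\mathcal L$ shows that $e_1^{(N+1)} = -\alpha_N/2$, where $\alpha_N$ is the simple (far-apart) root of $4x^3 - g_2(\Gamma_N)x - g_3(\Gamma_N)$ selected by $\mathcal S$ in the limit; this simple root depends analytically on $(g_2,g_3)$ near the limit, so $e_1^{(N)} \to -\pi^2/(3\phi^2)$ and $\omega_N = i\pi/\sqrt{3e_1^{(N)}} \to \phi$, both quadratically fast. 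Since $z \notin \phi\ZZ$, the denominator $\sin(\pi z/\omega_N)$ in the rank-one formula is bounded away from $0$ for large $N$, so $x_N$ is an analytic function of $\omega_N$ near $\phi$, giving $x_N \to y_\infty$ quadratically fast.

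It then remains to exhibit a neighborhood $U$ of $y_\infty$ on which $\Phi_N$ has no poles for large $N$ and converges uniformly to a holomorphic function, so that Lemma~\ref{l51} applies. The key tool is the local identity
\[
    \Phi_N \;=\; \wp(\cdot;\Gamma) \circ \wp(\cdot;\Gamma_N)^{-1},
\]
valid in a neighborhood of $y_\infty$ whenever $\wp(\cdot;G)$ is a local biholomorphism at $z$, i.e.\ $\wp'(z;G) \ne 0$. In that generic case the $C^1$-convergence $\wp(\cdot;\Gamma_N) \to \wp(\cdot;G)$ (inherited from the analytic dependence used above) produces uniform local inverses, hence uniform convergence $\Phi_N \to \wp(\cdot;\Gamma) \circ \wp(\cdot;G)^{-1}$ on a fixed neighborhood $U$, and Lemma~\ref{l51} closes the argument. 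The main obstacle I anticipate is the exceptional case $\wp'(z;G)=0$, i.e.\ $z \in \phi/2 + \phi\ZZ$: there $\wp(\cdot;G)$ is a two-to-one local cover at $z$ and the inverse above is branched. The natural remedy is to pass to the uniformizer $t = \sqrt{y-y_\infty}$, observe that $x_N - y_\infty$ and $y_N - y_\infty$ are both quadratically small so their square roots are too (after a consistent branch choice), and lift the generic argument to this two-sheeted cover.
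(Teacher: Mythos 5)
Your argument is essentially the paper's: both reduce the claim to $\Phi_N(x_N)-\Phi_N(\wp(z;\Gamma_N))$, show that $x_N$ and $\wp(z;\Gamma_N)$ converge quadratically fast to $\wp(z;G)$ where $G=\bigcap_n\Gamma_n=\phi\ZZ$, establish local uniform convergence of the holomorphic functions $\Phi_N$ near that point, and finish with Lemma~\ref{l51}. The only divergence is in how that local uniform convergence is obtained: the paper reads it off the identity $\Phi_N(\wp(w;\Gamma_N))=\wp(w;\Gamma)$ without explicitly inverting $\wp(\cdot;\Gamma_N)$, so the branching at $\wp'(z;G)=0$ that you flag as the main obstacle is not singled out there; if you want to avoid your (only sketched) uniformization patch, note that the same identity shows $\Phi_N$ is uniformly bounded on a fixed neighborhood of $\wp(z;G)$ for large $N$ (its values there are $\wp(w;\Gamma)$ for $w$ ranging over a small disk around $z\notin\Gamma$), so Vitali's theorem upgrades the pointwise convergence you already have off the critical value to uniform convergence on the whole neighborhood, covering the half-period case as well.
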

    \begin{proof}
        It is clear that $\Gamma_0 = \Gamma$ and $\Gamma_n$ is optimal and has index $2$ in $\Gamma_{n-1}$. Thus, by Lemma~\ref{l36}~\ref{l36i}, $G = \bigcap_n \Gamma_n$ is spanned by a complex number $\Omega$. Moreover, an appropriate choice of signs of $\omega_N$ guarantees that $\omega_N$ converges to $\Omega$ quadratically fast (note that the $x_N$ does not change, when $\omega_N$ is replaced with $-\omega_N$). From this we can conclude that $x_N$ converges quadratically fast to $x = \wp(z, G)$. It is clear that the sequence $\wp(z, \Gamma_N)$ also converges to $x$ quadratically fast. The relation $\Phi_N(\wp(w, \Gamma_N)) = \wp(w, \Gamma)$ easily implies that the sequence of meromorphic functions $\Phi_N$ converges on the set $\CC\setminus \wp(\Gamma, G)$ to a holomorphic function. In particular, it converges in some neighborhood of $x$. Since
        $$
            \Phi_N(x_N) - \wp(z,\Gamma) = \Phi_N(x_N) - \Phi_N(\wp(z, \Gamma_N)),
        $$
        Lemma~\ref{l51} is proved.
    \end{proof}
    \begin{remarks}
        \begin{enumerate}
            \item All the algorithms that we have presented require to compute some number of optimal subgroups (more precisely, roots of the corresponding polynomials) of undex $2$. If it is required to perform significant amount of computations with a fixed elliptic curve, it is reasonable to precompute those roots and store them. Since usual number of iterations that is required to achieve good accuracy is quite small, the memory cost is negligible.
            \item Algorithm~\ref{alg51} can be optimized if it is not required to approximate all four functions at once. At first we note that the duplication step is necessary only for computation of $\sigma$. In addition, the computation of $\wp$ does not depend on other functions, $\wp'$ depends only on $\wp$, and both $\zeta$ and $\sigma$ depend on $\wp$ and $\wp'$ (but are independent from each other). Thus, to compute only a subset of values $\{ \wp(z, \Gamma), \wp'(z, \Gamma), \zeta(z, \Gamma), \sigma(z, \Gamma)\}$ one can perform a simpler version of the algorithm.
            \item In practice of computations with special functions it is often required to calculate the derivatives of these functions with respect to their variables and parameters. Fortunately, the Weierstrass functions satisfy specific differential equations~\cite[Eqs.~18.6.1-24]{AbramovitzStegun} that allow to compute all their derivatives with respect to $z$, $g_2$, and $g_3$ given only their values.
        \end{enumerate}
    \end{remarks}
\section{Numerical experiments}\label{sec:experiments}
    \subsection{Demonstration of quadratic convergence}
    Table~\ref{InvConv} shows the convergence of values $g_2$, $g_3$, and $\Delta$ of the lattices $\Gamma_0 \supset \Gamma_1 \supset \dots$ obtained by choosing an optimal subgroup of index $2$ on each step starting from an initial lattice $\Gamma = \Gamma_0$. More precisely, the Table~\ref{InvConv} numerically verifies statements of Lemma~\ref{l36}~\ref{l36ii} and Theorem~\ref{t32}. In the experiment we consider $g_2(\Gamma) = 3 + i$, $g_3(\Gamma) = 2$. For sequences that converge to a non-zero limit we show $30$ decimal places and underline those that do not change under further iterations.
    \begin{center}
        \begin{table}
        \begin{tabular}{|c||c|}
            \hline $n$ & $g_2(\Gamma_n)$ \\ \hline
            $0$ &  $3 + i$ \\
            $1$ & $\underline{3.75}4046867215436982426029182236 + \underline{0.54}0233967914303556235718229303i$ \\
            $2$ & $\underline{3.753771977}059587664114076064651 + \underline{0.54105649}4694848332981391043677i $ \\
            $3$ & $\underline{3.753771977783970498856}515753866 + \underline{0.54105649509239637214}2231763369i$ \\
            $4$ & $\underline{3.753771977783970498856026746202} + \underline{0.541056495092396372141662941563}i$ \\
            \hline $n$ & $g_3(\Gamma_n)$ \\ \hline
            $0$ &  $2$ \\
            $1$ & $\underline{1.388}499235514097862630349344347 + \underline{0.30}3503045561126645130957672495i$ \\
            $2$ & $\underline{1.388761317}907632838227691307107 + \underline{0.302872794}924673800604147812848i$ \\
            $3$ & $\underline{1.388761317361341232445}441066859 + \underline{0.302872794571811322640}063572398i$ \\
            $4$ & $\underline{1.388761317361341232445792939849} + \underline{0.302872794571811322640537643014}i$ \\
            \hline $n$ & $\Delta(\Gamma_n)$ \\ \hline
            $0$ &  $-90 + 26i$ \\
            $1$ & $0.0513671601 - 0.0736732833i$ \\
            $2$ & $10^{-8}(-6.0337705864 - 6.0680444150i)$ \\
            $3$ & $10^{-20}(3.1944965545 + 7.0811930101i)$ \\
            $4$ & $10^{-44}(-1.8537859902 + 6.1278114526i)$ \\ \hline

        \end{tabular}
        \caption{Convergence of Weierstrass invatiants}\label{InvConv}
        \end{table}
    \end{center}

    The convergence of Algorithms~\ref{alg41} and~\ref{alg42} is demonstrated in Table~\ref{PerConv}. As before, we denote by $N$ the number of iterations of the Landen transformation in these algorithms; let $\omega_N$ denote the obtained approximation of the first period. We continue to use the lattice $\Gamma$ defined above and consider the point $(x,y) = (1, i 2^{1/4} \exp(i\pi/8))$ on the curve $y^2 = 4x^3 - (3 + i)x - 2$. By $z_N$ we denote the approximation of the $\mathcal A_{\Gamma}(x,y)$.

    \begin{center}
        \begin{table}
        \begin{tabular}{|c||c|}
            \hline $N$ & $\omega_N$ \\ \hline
            $1$ & $\underline{2.4}38686216965391972931889039948 - \underline{0}.105955591501509972308694592494i$ \\
            $2$ & $\underline{2.41753}3084489739068968559720359 - \underline{0.0865}27699052746187490062524284i$ \\
            $3$ & $\underline{2.417537043}106790993092839472406 - \underline{0.08655507279}1232588159988669113i$ \\
            $4$ & $\underline{2.4175370430818008602841}29467153 - \underline{0.0865550727995970630460}98367581i$ \\
            $5$ & $\underline{2.417537043081800860284148042662} - \underline{0.086555072799597063046083291895}i$ \\
            \hline $N$ & $z_N$ \\ \hline
            $1$ & $\underline{1.1}48555533478147362319765496898 + \underline{0.16}5542168411567103609102081635i$ \\
            $2$ & $\underline{1.1355}03055177661590826945142841 + \underline{0.1682}41922881856989982768683181i$ \\
            $3$ & $\underline{1.1355110948}76954045535981843541 + \underline{0.1682319645}15852775813630885200i$ \\
            $4$ & $\underline{1.13551109486898465067558}5138964 + \underline{0.1682319645066226442821}84848575i$ \\
            $5$ & $\underline{1.135511094868984650675588970809} + \underline{0.168231964506622644282195234558}i$ \\ \hline
        \end{tabular}
        \caption{Convergence of numerical approximations of period and the Abel map}\label{PerConv}
        \end{table}
    \end{center}

    Finally we show a similar table (namely, Table~\ref{WeierConv}) for numerical approximations of the Weierstrass functions using Algorithm~\ref{alg51}. Moreover, to verify that the functions $\wp$ and $\wp'$ constitute the inverse of the Abel map we calculate the Weierstrass functions at $z = z_5 \approx \mathcal A_{\Gamma}(x,y)$ (the value of $z_5$ is given in Table~\ref{PerConv}).

    \begin{center}
        \begin{table}
        \begin{tabular}{|c||c|}
            \hline $N$ & $\wp(z, \Gamma)$ \\ \hline
            $1$ & $\underline{0.9}58026049179506041264531653969 + \underline{0.00}4519165676065043298534371539i$ \\
            $2$ & $\underline{1.0000}28837131162405084132407776 - \underline{0.0000}30226500252411472240586290i$ \\
            $3$ & $\underline{0.9999999999}76565657334803373714 - \underline{0.0000000000}32173573518069468236i$ \\
            $4$ & $\underline{1.00000000000000000000000}9663711 + \underline{0.0000000000000000000000}34817638i$ \\
            $5$ & $\underline{0.999999999999999999999999999999} + \underline{0.000000000000000000000000000000}i$ \\
            \hline $N$ & $\wp'(z, \Gamma)$ \\ \hline
            $1$ & $-\underline{0}.608720652206082355023823356004 + \underline{1}.124401544733847722373159855446i$ \\
            $2$ & $-\underline{0.45}4989563842651342413589094212 + \underline{1.098}565867378753359267771366508i$ \\
            $3$ & $-\underline{0.455089860}656029273164117412507 + \underline{1.098684113}353679030979521996790i$ \\
            $4$ & $-\underline{0.4550898605622273413043}14405895 + \underline{1.098684113467809966039}928107431i$ \\
            $5$ & $-\underline{0.455089860562227341304357757822} + \underline{1.098684113467809966039801195240}i$ \\
            \hline $N$ & $\zeta(z, \Gamma)$ \\ \hline
            $1$ & $\underline{0.78}8943935813327461013174357466 - \underline{0.20}5752011501413428445974864891i$ \\
            $2$ & $\underline{0.78355}7307095215698756718513021 - \underline{0.206}404251262761178141034497064i$ \\
            $3$ & $\underline{0.783555262}397155738789668978180 - \underline{0.206399816}303039582701103284365i$ \\
            $4$ & $\underline{0.7835552624125877530424}74199703 - \underline{0.2063998162856248000766}13912688i$ \\
            $5$ & $\underline{0.783555262412587753042456275712} - \underline{0.206399816285624800076666108370}i$ \\
            \hline $N$ & $\sigma(z, \Gamma)$ \\ \hline
            $1$ & $\underline{1.119}535114208134786254589186127 + \underline{0.1}40020472376646390837218752895i$ \\
            $2$ & $\underline{1.11947}6734388964409646347819029 + \underline{0.13978}6796169078699277398913537i$ \\
            $3$ & $\underline{1.1194741359}28442587178028125697 + \underline{0.1397886896}82872474343814804895i$ \\
            $4$ & $\underline{1.11947413593212617223716}8580364 + \underline{0.1397886896914695257773}52475490i$ \\
            $5$ & $\underline{1.119474135932126172237167916856} + \underline{0.139788689691469525777332568971}i$ \\ \hline
        \end{tabular}
        \caption{Convergence of numerical approximations of Weierstrass functions}\label{WeierConv}
        \end{table}
    \end{center}
    \FloatBarrier
    \subsection{An application to a conformal mapping problem}
    In \cite{Smirnov} the Weierstrass functions were applied to solve and analyze the conformal mapping problem for the region given in Fig.~\ref{fig1}.
    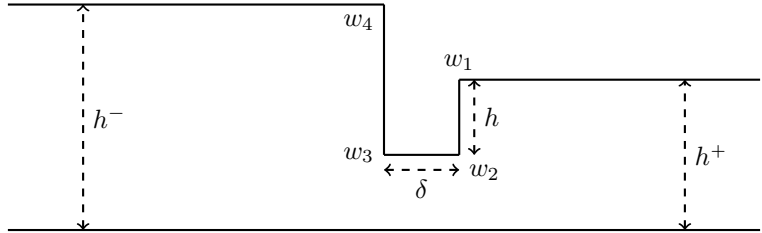
\begin{figure}[h!]
	\centering
\begin{tikzpicture}
	\draw[black,thick] (-5,-1) -- (5,-1);
	\draw[black,thick] (-5,2) -- (0,2) node [pos=1,below left] {$w_4$};
	\draw[black,thick] (0,2) -- (0,0) node [pos=1,left] {$w_3$};
	\draw[black,thick] (0,0) -- (1,0) node [pos=1,below right] {$w_2$};
	\draw[black,thick] (1,0) -- (1,1) node [pos=1,above] {$w_1$};
	\draw[black,thick] (1,1) -- (5,1);
	\draw[<->,black,dashed,thick] (0,-0.2) -- (1,-0.2) node [pos=0.5,below] {$\delta$};
	\draw[<->,black,dashed,thick] (1.2,0) -- (1.2,1)  node [pos=0.5,right] {$h$};
	\draw[<->,black,dashed,thick] (-4,-1) -- (-4,2) node [pos=0.5,right] {$h^-$};
	\draw[<->,black,dashed,thick] (4,-1) -- (4,1) node [pos=0.5,right] {$h^+$};
\end{tikzpicture}
	\caption{The domain $\Omega$.}\label{fig1}
\end{figure}
    It was proved that there exists a rectangular lattice $\Gamma$ (that is, $\Gamma = \Span\{\omega_1, \omega_2\}$, where $\omega_1, \omega_2/i > 0$) and parameters $D, z^+, z^-$ such that the following statements hold.
    \begin{enumerate}
        \item $D$ is purely imaginary and $0 < z^-/i < z^+/i < \omega_2/i$.
        \item The function 
$$
Q(z) = Dz + \frac{h^-}{\pi}\ln\left(\frac{\sigma(z-z^-)}{\sigma(z+z^-)}\right) - \frac{h^+}{\pi}\ln\left(\frac{\sigma(z-z^+)}{\sigma(z+z^+)}\right) - i(h^- - h^+),
$$
        conformally maps the rectangle with vertices $0, \omega_2/2, (\omega_2 - \omega_1)/2, -\omega_1/2$ onto $\Omega$ (here $\sigma(z)$ is short for $\sigma(z, \Gamma)$).
    \end{enumerate}
    A necessary and sufficient condition for the parameters $\omega_1, \omega_2, D, z^+, z^-$ to satisfy the statement 2 above was obtained in \cite[Eq.~(3.8)]{Smirnov}. This condition is given in the form of 4 real (nonlinear) equations. To reduce the parameters (i.e., to make the number of equations match the number of unknowns) it was proposed to consider a one-parameter family of elliptic curves instead of a two-parameter family of lattices. The curve corresponding to a parameter $\gamma \in (-1/6, 1/6)$ is given by the equation $y^2 = 4(x - e_1(\gamma))(x - e_2(\gamma))(x - e_3(\gamma))$, where $e_1(\gamma) = \gamma - 1/2$, $e_2(\gamma) = -2\gamma$, and $e_3(\gamma) = \gamma + 1/2$.

    In~\cite[Sec.~4.2]{Smirnov} it was shown that parameters $\gamma, D, z^+, z^-$ have limiting values as $\delta \rightarrow 0$ and other parameters of the domain $\Omega$ being fixed. Moreover, the conformal mapping also survives under the passage to the limit. It was shown that $D \rightarrow ih/3$, $\gamma \rightarrow -1/6$ (that is, the limit curve is singular or, equivalently, the limit discrete subgroup is no longer a lattice) and $1/6 + \gamma \sim C\sqrt{\delta}$ with an appropriate constant $C$. However, the computational approach to the Weierstrass functions in~\cite{Smirnov} does not allow to solve numerically the system of equations on parameters for small enough $\delta$. Using the Landen-type method we finally can numerically confirm the theoretical estimations on behaviour $\gamma, D, z^+, z^-$ as $\delta \rightarrow 0$.

    In Fig.~\ref{fig2} and~\ref{fig3} we show the parameters $\gamma, D, z^+, z^-$ of the conformal mapping as functions of $\delta$ with fixed $h = 0.6$, $h^+ = \pi$, and $h^- = \pi + 0.5$. In Fig.~\ref{fig2} it is clear that the parameters converge as $\delta\rightarrow 0$, and in Fig.~\ref{fig3} we compare the asymptotics of $\gamma + 1/6$ and $\sqrt{\delta}$ as $\delta \rightarrow 0$.

    \begin{figure}[ht!]
    	\centering
    	\includegraphics[width=0.7\textwidth]{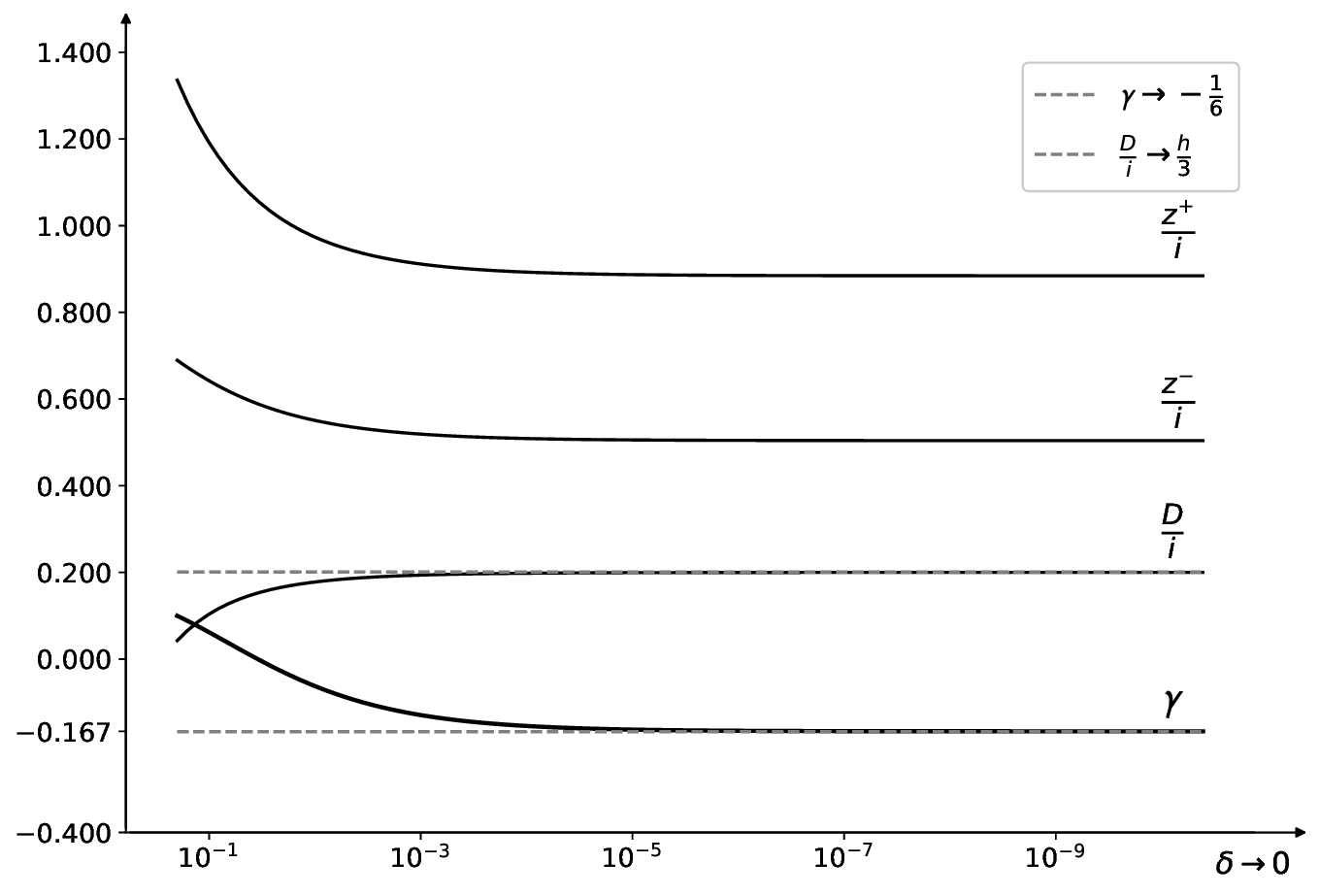}
    	\caption{The behaviour of $\gamma, D, z^+, z^-$ as functions of $\delta$.}
    	\label{fig2}
    \end{figure}

    \begin{figure}[ht!]
		\centering
		\includegraphics[width=0.7\textwidth]{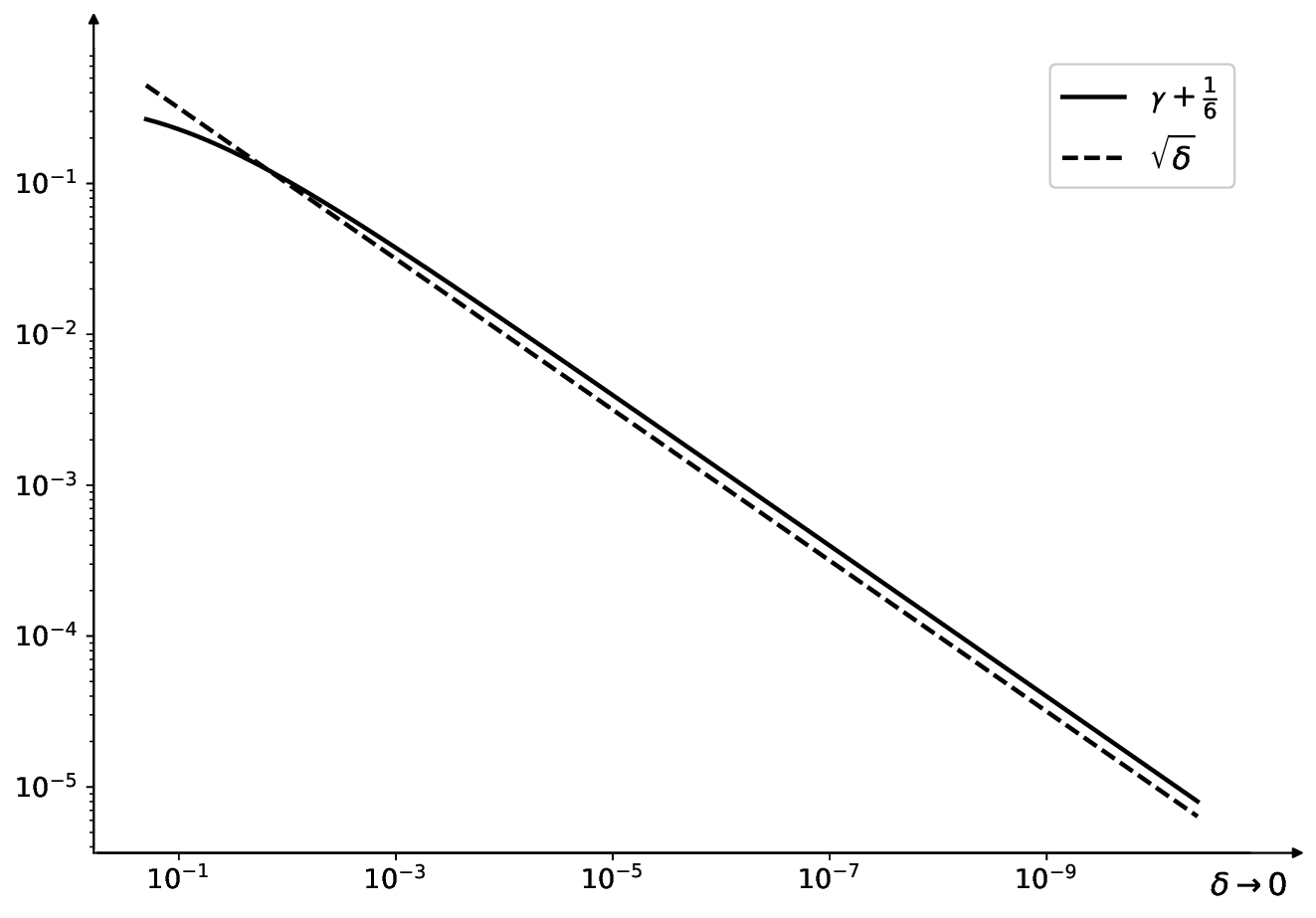}
		\caption{Asymptotics of $\gamma + 1/6$ as $\delta \rightarrow 0$.}
		\label{fig3}
	\end{figure}
	\FloatBarrier
	\section*{Acknowledgments}
		The authors express gratitude towards A.B.~Bogatyrev and S.A.~Goreinov for valuable discussions. M.S.~Smirnov  was supported by the Moscow Center of Fundamental and Applied Mathematics at INM RAS (Agreement with the Ministry of Education and Science of the Russian Federation No.075-15-2022-286).

    \printbibliography
\end{document}